\newcommand{\C}{\mbox{\rm \,l\kern-0.52em C}}
\newcommand{\Ce}{\rm \,l\kern-0.35em C}
\newcommand{\R}{{\rm l}\!{\rm R}}
\newcommand{\Z}{{\sf Z}\!\!{\sf Z}}
\newcommand{\Q}{\,{\rm l}\!\!\!{\rm Q}}
\newcommand{\N}{{\rm l}\!{\rm N}}
\newcommand{\Sup}{{\rm Sup}}
\newtheorem{theorem}{Theorem}[section]
\newtheorem{deflem}[theorem]{Definition and Lemma}
\newtheorem{definition}[theorem]{Definition}
\newtheorem{prop}[theorem]{Proposition}
\newtheorem{cor}[theorem]{Corollary}
\newtheorem{lemma}[theorem]{Lemma}
\newtheorem{remark}[theorem]{Remark}
\renewenvironment{proof}{{\bf Proof:}}{\mbox{}\hfill $\Box$}
\newenvironment{proof1}{{\bf Proof of Theorem 6.1:}}{\mbox{}\hfill $\Box$}
\newenvironment{proof2}{{\bf Proof sketch:}}{\mbox{}\hfill $\Box$}
\theoremstyle{definition}
\date{}
\title{The Chern-Connes character is not rationally injective}
\author{Michael Puschnigg}
\date{}
\begin{document}
\maketitle

{\bf Abstract:}
We show that the Chern-Connes character from Kasparov's bivariant K-theory to bivariant local cyclic cohomology is not always rationally injective. 
Counterexamples are provided by the reduced group $C^*$-algebras of word-hyperbolic groups with Kazhdan's property (T). The proof makes essential use of Skandalis' work on K-nuclearity and of Lafforgue's recent demonstration of the Baum-Connes conjecture with coefficients for word-hyperbolic groups.\\
\\

{\bf Keywords}: Kasparov theory, local cyclic cohomology, Chern-Connes character, word-hyperbolic groups, Property (T), Baum-Connes conjecture.\\
\\

{\bf AMS Classification:} 19K35, 46L80, 22D25.

\section{Introduction}
The classical Chern character is the (essentially unique) multiplicative 
natural transformation 
$$
ch: \, K^*_{top}(X) \,\longrightarrow H^*(X,\Q)
\eqno(1.1)
$$
from the Bott-periodic Atiyah-Hirzebruch topological $K$-theory of a locally compact Hausdorff space $X$ to its cohomology with coefficients in the constant sheaf $\Q$.\\
\\
It plays a central role in $K$-theory. Grothendieck introduced $K$-theory in order to derive his version of the Riemann-Roch Theorem, which compares Poincar\'e duality in $K$-theory and rational cohomology via the Chern character \cite{BS}.
In his work on index theory, Atiyah realized that elliptic operators on a compact manifold define cycles in the topological $K$-homology of the manifold. 
From this point of view the famous Atiyah-Singer Index theorem becomes a statement about the pairing between $K$-theory and $K$-homology \cite{At}. The Chern character plays again a central role because it transforms the index theorem into the index formula which allows explicit calculations \cite{AS}.\\
\\
The Chern character on $K$-theory is an isomorphism rationally
$$
ch\otimes\Q:\,K^*_{top}\otimes\Q\overset{\simeq}{\longrightarrow}H^*(-,\Q),
\eqno(1.2)
$$
so that not too much information is lost under this transformation. This fact makes $K$-theory to some extent computable because ordinary cohomology can be calculated by standard techniques from homological algebra.\\
\\
The natural framework for index theory is the Bott-periodic operator $K$-theory of Banach algebras and in particular Kasparov's bivariant $KK$-theory of $C^*$-algebras. The central tool of the latter theory is the Kasparov product which allows to treat generalized index theory from a very conceptual point of view. Moreover, bivariant $K$-theory can (contrary to ordinary $K$-theory or $K$-homology) be characterized by a simple system of axioms. On the subcategory of commutative $C^*$-algebras, which is anti-equivalent to the category of locally compact Hausdorff spaces, the functors obtained from $KK$ by fixing one of the two variables coincide with topological $K$-theory resp. $K$-homology.\\
\\
It was the search for a generalized Chern character in operator $K$-theory which led A.~Connes to the invention of cyclic (co)homology \cite{Co1}. He defined a cyclic Chern character 
$$
ch: \, K_*(-) \,\longrightarrow HP_*(-)
\eqno(1.3)
$$
from operator $K$-theory to his periodic cyclic homology. As for ordinary cohomology of spaces, the cyclic homology of an associative (Banach) algebra turns out to be computable by methods of homological algebra.
If $A={\cal C}^\infty(M)$ is the algebra of smooth functions on a compact manifold, then its periodic cyclic homology coincides with the de Rham cohomology of $M$, and Connes' cyclic Chern character coincides with the ordinary one \cite{Co1}.\\
\\
Due to its "noncommutative nature", cyclic cohomology serves also as a target of Chern characters in $K$-homology. In \cite{Co1} Connes gave explicit formulas for the character of finitely summable Fredholm modules over a Banach algebra (these generalize the notion of elliptic operators on compact manifolds) with values in periodic cyclic cohomology. In \cite{Co2} he obtained a much more general character formula for the huge class of Theta-summable Fredholm modules. It takes values in his entire cyclic cohomology.\\
\\
The first attempt to construct a character on bivariant $K$-theory was made by Nistor \cite{Ni}, who attached well behaved bivariant character cocycles to finitely summable Kasparov bimodules. Cuntz \cite{Cu2} introduced a general multiplicative bivariant Chern-Connes character on his bivariant $K$-theory for locally convex algebras with values in periodic cyclic cohomology.
A multiplicative bivariant Chern-Connes character 
$$
ch_{biv}:\,\,KK(-,-)\,\longrightarrow\,\,HC_{loc}(-,-)
\eqno(1.4)
$$
on Kasparov's $KK$-theory was finally constructed in \cite{Pu2}, \cite{Pu4}. 
It takes values in the bivariant local cyclic cohomology \cite{Pu4}, 
which was invented for that purpose (there can be no character on $KK$-theory with values in any of the other cyclic theories). 
The Chern-Connes character is uniquely determined by its naturality and multiplicativity. Again the target of the character can be computed by homological means. For finitely summable and Theta-summable Fredholm modules $ch_{biv}$ coincides with the various partial Chern characters in $K$-homology constructed before \cite{Pu5}.\\
\\
A basic question, which imposes itself, is: how much information is preserved by the Chern-Connes character ?\\
\\
While one cannot hope for any kind of surjectivity of a bivariant character, 
(the formal reasons are explained in \cite{M},7.10), the most optimistic claim would be that the Chern-Connes character is always rationally injective. In fact, this is the case for a quite large class of $C^*$-algebras, namely those which are $KK$-equivalent to commutative ones (\cite{Pu4}).\\
\\
The first examples of $C^*$-algebras which are not in this class were found by Skandalis \cite{Sk}. If $\Gamma$ is a word-hyperbolic group which has Kazhdan's Property $(T)$ (there exist many such groups), then the reduced group $C^*$-algebra $C^*_r\Gamma$ is not $KK$-equivalent to any nuclear (and in particular to any commutative) $C^*$-algebra. \\
\\
The main result of this paper states that these algebras already provide counterexamples to the rational injectivity of $ch_{biv}$.
\begin{theorem}
Let $\Gamma$ be a word-hyperbolic group with Kazhdan's Property $(T)$.
Then
$$
ch_{biv}:\,\,KK(C^*_r\Gamma,C^*_r\Gamma)\,\longrightarrow\,\,HC_{loc}(C^*_r\Gamma,C^*_r\Gamma)
\eqno(1.5)
$$
is not rationally injective. 
\end{theorem}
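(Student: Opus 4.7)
My plan is to factor the Chern-Connes character through a smooth dense subalgebra of $C^*_r\Gamma$ and exploit the contrast between the resulting bivariant theories. The argument combines Skandalis's non-K-nuclearity result, which will supply a non-zero element in $KK(C^*_r\Gamma, C^*_r\Gamma)$, with Lafforgue's Baum--Connes theorem, which will force that element to map to zero under $ch_{biv}$.

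First, I would use that word-hyperbolic groups admit Jolissaint's algebra $H^\infty(\Gamma)$ of rapidly decaying functions -- a Fr\'echet subalgebra of $C^*_r\Gamma$ which is dense, smooth, and stable under holomorphic functional calculus. Consequently the inclusion $\iota\colon H^\infty(\Gamma) \hookrightarrow C^*_r\Gamma$ induces isomorphisms on $K$-theory and, crucially, on bivariant local cyclic cohomology. This stability under passage to smooth dense subalgebras closed under functional calculus is precisely the feature that led to the invention of $HC_{loc}$, so the target of $ch_{biv}$ cannot distinguish $C^*_r\Gamma$ from $H^\infty(\Gamma)$.

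Next, I would invoke Lafforgue's proof of Baum--Connes for $\Gamma$. Since $\Gamma$ has property~(T), this proof takes place in a Banach version of $KK$-theory and produces a dual-Dirac factorization of $1$ at the smooth level, passing through the proper crossed product $B = C_0(\underline{E}\Gamma) \rtimes_r \Gamma$, which is itself $KK$-equivalent to a commutative $C^*$-algebra. The factorization descends to Cuntz's bivariant K-theory $kk^{lc}$ for locally convex algebras and realises $H^\infty(\Gamma)$ there as a retract of $B$. Combined with the rational injectivity of $ch_{biv}$ on objects $KK$-equivalent to commutative ones (the known case recalled in the introduction), this forces
$$ch^{lc}_{biv}\colon kk^{lc}(H^\infty(\Gamma), H^\infty(\Gamma)) \,\longrightarrow\, HC_{loc}(C^*_r\Gamma, C^*_r\Gamma)$$
to be rationally injective. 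The inclusion $\iota$ induces a natural comparison homomorphism $\iota^\sharp\colon KK(C^*_r\Gamma, C^*_r\Gamma) \to kk^{lc}(H^\infty(\Gamma), H^\infty(\Gamma))$, and by construction $ch_{biv} = ch^{lc}_{biv} \circ \iota^\sharp$.

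Finally, rational injectivity of $ch_{biv}$ on $KK(C^*_r\Gamma, C^*_r\Gamma)$ would force rational injectivity of $\iota^\sharp$, which in turn would exhibit $C^*_r\Gamma$ as rationally equivalent (in the locally convex bivariant K-theory) to the K-nuclear smooth model $H^\infty(\Gamma)$ -- contradicting Skandalis's theorem that $C^*_r\Gamma$ is not $KK$-equivalent to any nuclear $C^*$-algebra. Any non-zero class in the rational kernel of $\iota^\sharp$ then witnesses the failure of injectivity. The hard part will be the construction used in the second step: descending Lafforgue's Banach-$KK$ factorization into $kk^{lc}$ in a way compatible with the comparison $\iota^\sharp$, and thereby establishing the rational injectivity of $ch^{lc}_{biv}$ on $H^\infty(\Gamma)$. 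This is where the smooth structure, the $\gamma$-element and local cyclic cohomology have to be coordinated most carefully.
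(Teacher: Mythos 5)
Your overall strategy differs from the paper's, and as it stands it has two genuine gaps.

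First, the logical endgame does not close. Skandalis' theorem does not say abstractly that ``$C^*_r\Gamma$ cannot be rationally equivalent to a nuclear model''; what it provides is one specific element, $j_r(\gamma)-1$, which is nonzero in $KK(C^*_r\Gamma,C^*_r\Gamma)\otimes\Q$. To contradict it you must show that \emph{this} element is annihilated by the map you are studying. Rational injectivity of your comparison map $\iota^\sharp\colon KK(C^*_r\Gamma,C^*_r\Gamma)\to kk^{lc}(H^\infty\Gamma,H^\infty\Gamma)$ would not ``exhibit $C^*_r\Gamma$ as rationally equivalent to $H^\infty\Gamma$'' in any category to which Skandalis' theorem applies; injectivity of a single bivariant comparison homomorphism does not produce an equivalence of objects. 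So your final contradiction is a non sequitur unless you prove concretely that $\iota^\sharp(j_r(\gamma))=\iota^\sharp(1)$ --- which is essentially the hard statement the paper proves (with $HC_{loc}$ in place of $kk^{lc}$), not something you can get for free.

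Second, and more fundamentally, the claimed ``dual-Dirac factorization of $1$ at the smooth level'' in $kk^{lc}(H^\infty\Gamma,-)$ is precisely what fails for property (T) groups. Lafforgue's homotopy joining $\gamma$ to a rank-one projection exists only among Fredholm representations of exponent $\lambda=e^s>1$ (weakly exponential growth); for a Kazhdan group there is \emph{no} such homotopy at exponent $1$, and Jolissaint's rapid-decay algebra lives at exponent $1$. The paper's actual route is: (i) transfer Lafforgue's homotopy into characteristic homomorphisms out of the weighted unconditional completions $q_{R_\lambda}(A_\lambda\Gamma)$ for each $\lambda>1$; (ii) use the Limit Theorem for local cyclic cohomology to pass to the inductive system as $\lambda\searrow 1$ and identify the result with $X_*({\cal T}C^*_r\Gamma)$; (iii) conclude that the characteristic homomorphism of $j_r(\gamma_{KS})$ is a local $HC$-equivalence, so that $ch_{biv}(j_r(\gamma))$ is an invertible idempotent in the derived ind-category and hence equals $1$; (iv) combine with Skandalis' $j_r(\gamma)\neq 1$ rationally. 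The Limit Theorem is the one feature of $HC_{loc}$ that has no analogue in $KK$ or in $kk^{lc}$ of a fixed smooth subalgebra, and it is exactly the ingredient your plan omits. Without it, step two of your outline cannot be carried out, and with it the detour through $kk^{lc}$ and the injectivity-transfer argument becomes unnecessary: one exhibits $j_r(\gamma)-1$ directly as a nonzero rational class in the kernel of $ch_{biv}$.
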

To show this, it is necessary to exhibit an element in the kernel of that map.
There is in fact a canonical candidate, which is derived from the "Gamma"-element $\gamma\in KK_\Gamma(\C,\C)$ of Kasparov \cite{Ka2}. The "Gamma"-element is idempotent under the Kasparov product and the idempotent element $j_r(\gamma)\in KK(C^*_r\Gamma,C^*_r\Gamma)$, obtained from it by descent \cite{Ka2}, acts as a natural projection onto the image of the Baum-Connes assembly map. The main result of Skandalis work, cited above, is  

\begin{theorem}(Skandalis \cite{Sk})
Let $\Gamma$ be a word-hyperbolic group with Kazhdan's Property $(T)$.
Let $\gamma\in KK_\Gamma(\C,\C)$ be Kasparov's "Gamma"-element and let\\  $j_r(\gamma)\in KK(C^*_r\Gamma,C^*_r\Gamma)$ be the element obtained from it by descent. Then
$$
j_r(\gamma)\,\neq\,1\,\in\,KK(C^*_r\Gamma,C^*_r\Gamma)\otimes\Q
\eqno(1.6)
$$
\end{theorem}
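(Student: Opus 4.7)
My approach is by contradiction: assume $j_r(\gamma)=1$ in $KK(C^*_r\Gamma, C^*_r\Gamma)\otimes\Q$ and derive a contradiction from Kazhdan's property (T).

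The first ingredient is Kasparov's construction of the Gamma element for a word-hyperbolic group, which yields a factorization
$$\gamma \;=\; d\circ\eta,\qquad \eta\in KK_\Gamma(\C, A),\ \ d\in KK_\Gamma(A,\C),$$
through a proper, cocompact $\Gamma$-$C^*$-algebra $A$ (e.g.\ $A=C_0(\mathcal{P})$ for a Rips complex $\mathcal{P}$). Applying reduced descent gives $j_r(\gamma) = j_r(d)\circ j_r(\eta)$, a factorization through the crossed product $B := C^*_r(\Gamma, A)$. Since the action is proper, $B$ coincides with the full crossed product $C^*(\Gamma, A)$ and is nuclear (type I, in fact).

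Next, naturality of descent with respect to the canonical surjection $\pi\colon C^*\Gamma\twoheadrightarrow C^*_r\Gamma$ yields the compatibility $j_r(d) = [\pi]\circ j(d)$ in $KK(B, C^*_r\Gamma)$, while $j_r(\eta)\in KK(C^*_r\Gamma, B)$ is intrinsic to the reduced side. Setting $\alpha := j(d)\circ j_r(\eta)\in KK(C^*_r\Gamma, C^*\Gamma)$, one computes
$$[\pi]\circ\alpha \;=\; j_r(d)\circ j_r(\eta) \;=\; j_r(\gamma) \;=\; 1\quad \text{in } KK(C^*_r\Gamma,C^*_r\Gamma)\otimes\Q.$$
Thus $\alpha$ is a rational right inverse to $[\pi]$, and consequently $\pi_*\colon K_0(C^*\Gamma)\otimes\Q\to K_0(C^*_r\Gamma)\otimes\Q$ is rationally injective.

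This contradicts property (T). The Kazhdan projection $p_K\in C^*\Gamma$ associated with the trivial representation satisfies $\tau_{\mathrm{triv}}([p_K])=1$ and hence defines a nonzero rational class in $K_0(C^*\Gamma)$. Since $\Gamma$ is word-hyperbolic with property (T), it is infinite and non-amenable, so the trivial representation is not weakly contained in the regular one; equivalently $\pi(p_K)=0$ in $C^*_r\Gamma$, whence $\pi_*[p_K]=0$, contradicting the rational injectivity just established. The main obstacle is the compatibility $j_r(d)=[\pi]\circ j(d)$ and, more generally, the reconciliation of full and reduced descent through the nuclear intermediate algebra $B$; this is the technical heart of Skandalis's theory of $K$-nuclearity, whose moral is that nuclear targets cannot distinguish $C^*\Gamma$ from $C^*_r\Gamma$ in $KK$.
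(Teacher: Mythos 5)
Your argument breaks down at the step where you pass from the existence of a rational right inverse of $[\pi]$ to rational injectivity of $\pi_*$. From $[\pi]\circ\alpha=1$ in $KK(C^*_r\Gamma,C^*_r\Gamma)\otimes\Q$ you only get $\pi_*\circ\alpha_*=\mathrm{id}$ on $K_0(C^*_r\Gamma)\otimes\Q$, i.e. $\pi_*$ is rationally \emph{split surjective} and $\alpha_*$ is injective; nothing forces $\pi_*$ to be injective. Hence the Kazhdan projection produces no contradiction: $[p_K]$ simply lies in the kernel of $\pi_*$, which is perfectly compatible with $[\pi]$ admitting a rational section. In fact the statement you would need is false for exactly the groups at hand: by Lafforgue the reduced assembly map of a hyperbolic group is an isomorphism and the full assembly map factors through $K_*(C^*\Gamma)$, so $\pi_*$ really is rationally surjective while $[p_K]\neq 0$ lies in its kernel, i.e. $\pi_*$ is \emph{not} rationally injective. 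Composing your two elements in the other order only gives $\alpha\circ[\pi]=j(\gamma)$, and the Kazhdan projection then reproves the classical fact $j(\gamma)\neq 1$ in $KK(C^*\Gamma,C^*\Gamma)$ --- a statement about the full group $C^*$-algebra which does not imply $j_r(\gamma)\neq 1$ rationally. So the full-versus-reduced comparison by itself cannot detect the failure of $j_r(\gamma)=1$.

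This is precisely why Skandalis' proof, which the paper reproduces (Theorem 7.1 and Corollary 7.2), works with the tensor square rather than with $C^*\Gamma$: the diagonal $\Delta: C^*_{max}\Gamma\to C^*_r\Gamma\otimes_{max}C^*_r\Gamma$ sends the Kazhdan projection into the kernel $J$ of the map onto $C^*_r\Gamma\otimes_{min}C^*_r\Gamma$, and the Akemann--Ostrand property of hyperbolic groups (this is where hyperbolicity enters, beyond the mere properness of the Rips action that you use to make $A\rtimes\Gamma$ nuclear) maps $J$ into ${\cal K}(\ell^2\Gamma)$, so that $[\Delta(e)]$ is detected and $K_*(C^*_r\Gamma\otimes_{max}C^*_r\Gamma)\otimes\Q\to K_*(C^*_r\Gamma\otimes_{min}C^*_r\Gamma)\otimes\Q$ is not an isomorphism. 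If $j_r(\gamma)$ were rationally $1$, then $j_r(\alpha)$ would be a rational $KK$-equivalence between $C^*_r\Gamma$ and the nuclear algebra $A\rtimes_r\Gamma$, and applying it in both tensor factors would force the max/min comparison to be a rational $K$-isomorphism --- the actual contradiction. Your proposal never brings in the maximal tensor product or Akemann--Ostrand, and without them the argument cannot be repaired.
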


The proof of Skandalis' theorem uses only formal properties of $KK$. As such, it applies to various bifunctors, but not to $HC_{loc}$. So one may guess that this difference in the behavior of the two theories affects the properties of the Chern-Connes character. (This observation actually motivated the present work). In fact 

\begin{theorem}
Let $\Gamma$ be a word-hyperbolic group. Then
$$
ch_{biv}(j_r(\gamma))\,=\,1\,=\,ch_{biv}(1)\in\,HC_{loc}(C^*_r\Gamma,C^*_r\Gamma).
\eqno(1.7)
$$
\end{theorem}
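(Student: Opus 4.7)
\begin{proof2}
The plan is to exploit the fact that, in contrast to Kasparov $KK$-theory, bivariant local cyclic cohomology is Banach-algebraic in nature and to reduce the statement to Lafforgue's theorem, which for a word-hyperbolic group forces the descent of $\gamma$ to coincide with $1$ at the level of a suitable Banach-algebraic version of $KK$-theory.

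The first step is to recall that $\gamma$ arises from Kasparov's Dirac/dual-Dirac construction applied to a proper $\Gamma$-space $X$, for which one can take the Rips complex compactified by the Gromov boundary, equipped with the combing structure of a hyperbolic group. Over such a proper space an abstract rotation-homotopy exhibits $\gamma$ as equal to $1$; the content of Lafforgue's proof of Baum-Connes with coefficients for hyperbolic groups is that this homotopy can be made to converge in a Banach-algebraic framework built from an unconditional completion $\Bh\Gamma$ of $\C\Gamma$ sitting densely in $C^*_r\Gamma$. Consequently, after descent to this Banach level, the classes of $\gamma$ and $1$ already agree.

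The second step is to verify that the inclusion $\iota:\Bh\Gamma \hookrightarrow C^*_r\Gamma$ is an $HC_{loc}$-equivalence, i.e.\ induces an isomorphism on bivariant local cyclic cohomology. By construction, $HC_{loc}$ is computed via an inductive system of small Banach subalgebras with compact transition maps, so it only ``sees'' dense, smooth, holomorphically closed subalgebras; for word-hyperbolic groups the rapid-decay algebras of Haagerup and Jolissaint are the natural candidates for $\Bh\Gamma$. Combining the two steps, the naturality and multiplicativity of $ch_{biv}$ imply that $ch_{biv}(j_r(\gamma))$ and $ch_{biv}(1)=1$ have the same image in $HC_{loc}(\Bh\Gamma,\Bh\Gamma)$, which by the $HC_{loc}$-invariance of $\iota$ already equals $HC_{loc}(C^*_r\Gamma,C^*_r\Gamma)$.

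The main obstacle is, in my view, this last point: one must simultaneously secure two opposing requirements on $\Bh\Gamma$, namely that it be generous enough, of $L^1$-type, to support Lafforgue's Banach-algebraic homotopy between $\gamma$ and $1$, and small enough, with good decay and nuclear approximation properties, for the completion procedure defining $HC_{loc}$ to identify $\Bh\Gamma$ with $C^*_r\Gamma$. This is precisely the point at which Skandalis' analysis of $K$-nuclearity, mentioned in the abstract, should intertwine with the rapid-decay property of hyperbolic groups, and it is where most of the technical work is expected to sit.
\end{proof2}
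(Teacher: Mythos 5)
Your overall strategy---run Lafforgue's homotopy between $\gamma$ and the trivial module inside an unconditional Banach completion, and use the insensitivity of $HC_{loc}$ to the passage from such a completion to $C^*_r\Gamma$---is indeed the paper's strategy in outline, and you correctly identify the central tension (the completion must be large enough to carry representations of exponential growth, yet smooth inside $C^*_r\Gamma$). But as written the argument has two genuine gaps. First, the tension you flag is not resolved but is where the proof actually lives: no single Banach algebra $\Bh\Gamma$ does both jobs, and the paper instead works with the whole family $A_\lambda\Gamma$ of exponentially weighted unconditional completions (one for each exponent $\lambda=e^s>1$ of Lafforgue's Fredholm representations) and forms the formal inductive limit $"\lim_{\lambda\searrow 1}"A_\lambda\Gamma$, whose local cyclic cohomology is identified with that of $C^*_r\Gamma$ only by means of the Limit Theorem. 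Moreover, transporting the computation back to $HC_{loc}(C^*_r\Gamma,C^*_r\Gamma)$ is not a formal naturality statement: one needs the explicit formula $ch_{biv}(j_r(\gamma))=i_{p*}^{-1}\circ\Phi_{j_r(\gamma_{KS})*}\circ q(id,0)_*^{-1}$ in terms of characteristic homomorphisms on Cuntz algebras, plus a comparison showing that the $C^*$-level characteristic homomorphism and its Banach ind-algebra avatar are local $HC$-equivalences simultaneously. Your appeal to ``$\gamma=1$ in a Banach-algebraic $KK$-theory'' would additionally require a Chern character on that theory compatible with the one on Kasparov theory through the dense inclusion, which is not available; the paper works instead with explicit piecewise $C^1$ homotopies of the characteristic homomorphisms themselves.

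Second, and more seriously, the last step of your argument---concluding $ch_{biv}(j_r(\gamma))=ch_{biv}(1)$ directly from the homotopy---is exactly the step the paper warns cannot be made: because of higher inverse limits in the derived ind-category, the homotopies (which are not compatible with the structure maps of the inductive systems) only yield that the characteristic homomorphism of $\gamma_{KS}$ is \emph{invertible} in $HC_{loc}$, not that it equals the trivial one. The missing idea is the endgame: $ch_{biv}(j_r(\gamma))$ is therefore an invertible element of $HC_{loc}(C^*_r\Gamma,C^*_r\Gamma)$ which is also idempotent, since $\gamma$ is idempotent under the Kasparov product and both descent and $ch_{biv}$ are multiplicative; the only idempotent isomorphism being the identity, one concludes $ch_{biv}(j_r(\gamma))=1$. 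Without this idempotency argument your proof does not close.
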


The theorem appears as a rather straightforward consequence of Lafforgue's 
breakthrough \cite{La2} on the Baum-Connes conjecture with coefficients for word-hyperbolic groups. While there can be no essentially equivariant homotpy between the Gamma-element and the unit element among Hilbert modules with unitary $\Gamma$-action, Lafforgue exhibits (on 196 pages !) such a homotopy among Hilbert modules with $\Gamma$-action of weakly exponential growth. As local cyclic cohomology is flexible enough to deal with both kinds of homotopies, the conclusion follows.

\section{The Chern-Connes Character}

There exists a unique natural transformation of bifunctors 
$$
\begin{array}{cccc}
ch_{biv}: & KK(-,-) & \to & HC_{loc}(-,-)
\end{array}
\eqno(2.1)
$$
called the {\bf Chern-Connes Character}, on the category of separable, complex $C^*$-algebras from Kasparov's bivariant $K$-functor \cite{Ka1} to bivariant local cyclic cohomology, which is multiplicative and satisfies
$$
ch_{biv}(f_*^{KK})\,=\,f_*^{HC}
\eqno(2.2)
$$
for every homomorphism 
$f:A\to B$ of $C^*$-algebras, (\cite{Pu4}).\\
\\
An explicit construction of the Chern-Connes character is obtained as follows.
Let $A,B$ be separable $C^*$-algebras and let $\alpha\in KK(A,B)$ be a class in (even) bivariant $K$-theory. It can be represented by a bounded Fredholm module ${\cal E}=({\cal H}_B, \varphi,F)$ where ${\cal H}_B={\cal H}^+_B\oplus{\cal H}^-_B$ is a $\Z/2\Z$-graded Hilbert $B$-module,
$\varphi:A\to{\cal L(H}_B)$ is an even $*$-representation and $F\in{\cal L(H}_B)$ is an odd endomorphism satisfying $F^2=1$, 
$\varphi(A)(F^*-F)\subset {\cal K(H}_B)$, and $[F,\varphi(A)]\subset {\cal K(H}_B)$. One denotes by $\epsilon\in{\cal L(H}_B)$ the grading operator 
which equals $+1$ on the even and $-1$ on the odd part of ${\cal H}_B$.\\
\\
Following Cuntz \cite{Cu1}, one considers the universal extension of $C^*$-algebras 
$$
0\,\to\,qA\,\to\,QA\,\overset{id*id}{\longrightarrow}\,A\,\to\,0
\eqno(2.3)
$$
where $QA=A*A$ denotes the free product of two copies of $A$ in the category of $C^*$-algebras. To the given Fredholm module one may associate the homomorphisms 
$$
\phi_0:A\to {\cal L(H}_B),\,\,\,a\mapsto \frac{1+\epsilon}{2}\varphi(a)\frac{1+\epsilon}{2},
\eqno(2.4)
$$
$$
\phi_1:A\to {\cal L(H}_B),\,\,\,a\mapsto F\frac{1-\epsilon}{2}\varphi(a)\frac{1-\epsilon}{2}F.
\eqno(2.5)
$$
of $C^*$-algebras. The homomorphisms $q(\phi_0,\phi_1)$ and $Q(\phi_0,\phi_1)$  fit into a commutative diagram
$$
\begin{array}{ccccccccccc}
0 & \to & qA & \to & QA & \overset{id*id}{\longrightarrow} & A & \to & 0 \\
 & & \downarrow & & \downarrow & & \downarrow & & \\
0 & \to & {\cal K(H}_B) & \to & {\cal L(H}_B) & \to & {\cal Q(H}_B) & \to & 0 \\
\end{array}
\eqno(2.6)
$$
of morphisms of $C^*$-algebras. The map 
$$
\Phi_{\cal E}\,=\,q(\phi_0,\phi_1):\,qA\,\to\,{\cal K(H}_B)
\eqno(2.7)
$$ 
is called the {\bf characteristic homomorphism} associated to ${\cal E}$.\\
A well known argument of Joachim Cuntz \cite{Cu1} shows that the natural map 
$$
q(id,0):\,qA\to A
\eqno(2.8)
$$ 
is transformed into an isomorphism under any additive, split exact and matrix stable diffeotopy functor. In particular 
$$
q(id,0)_*\in HC_{loc}(qA,A)
\eqno(2.9)
$$ 
is invertible in the sense of \cite{Ka1}. Local cyclic cohomology is not only matrix stable as all cyclic homology theories, but satisfies the following much stronger stability condition. For any $*$-homomorphism of the form 
$$
i_p:B\to {\cal K(H}_B),\,b\mapsto pb,
\eqno(2.10)
$$
$p\in{\cal K(H}_B)$ a rank one projection, the element 
$$
i_{p*}\in HC_{loc}(B,{\cal K(H}_B))
\eqno(2.11)
$$ 
is invertible. (Note that $i_p$ depends on choices while $i_{p*}$ is unambiguously defined.) With this in mind we have the following formula for the bivariant Chern-Connes character:
$$
\begin{array}{ccccc}
ch_{biv}({\cal E}) & = &   i_{p*}^{-1}\circ\Phi_{{\cal E}*}\circ q(id,0)_*^{-1} & \in & HC^0_{loc}(A,B) \\
\end{array}
\eqno(2.12)
$$

\section{Fredholm representations of hyperbolic groups\\
(after V. Lafforgue)}

Let $\Gamma$ be a discrete group with finite symmetric set of generators $S$ and associated word length function $\ell_S$. Let $A$ be a 
separable Banach algebra equipped with an isometric linear $\Gamma$-action. Let finally $B$ be a $C^*$-algebra.
\begin{definition}
An (even) covariant Fredholm representation ${\cal E}\,=\,({\cal H},\varphi,\rho,F)$ of $(A,\Gamma)$ over $B$ of exponent $\lambda\geq 1$ 
is given by  
\begin{itemize}
\item a $\Z/2\Z$-graded Hilbert $B$-module ${\cal H}_B={\cal H}_B^+\oplus{\cal H}_B^-$,
\item an even representation $\varphi=\varphi_+\oplus\varphi_-:\,A\to{\cal L(H}_B)$ of $A$,
\item an even linear representation $\rho=\rho_+\oplus\rho_-:\,\Gamma\longrightarrow\,{\cal
L}({\cal H}_B)$ such that 
$$
 \parallel\rho(g)\parallel_{{\cal
L}({\cal H}_B)}\,=\,O(\lambda^{\ell_S(g)})
\eqno(3.1)
 $$ 
 and
 $$
 \begin{array}{cc}
 \rho(g)\varphi(a)\rho(g^{-1})\,=\,\varphi(g\cdot a), & \forall g\in\Gamma,\,\forall a\in A,
 \end{array}
 \eqno(3.2)
 $$
 \item an odd bounded operator $F\in{\cal
L}({\cal H}_B)$ such that 
\begin{itemize}
\item  $F^2=1$,
\item $F$ almost intertwines the representation $\varphi_+$ and $\varphi_-$, i.e.
$$
\begin{array}{cc}
[F,\varphi(a)]\,\in{\cal K(H}_B), & \forall a\in A. \\
\end{array}
\eqno(3.3)
$$

\item $F$ almost intertwines the representations $\rho_+$ and $\rho_-$, i.e.
$$
\begin{array}{cc}
\rho(g)F\rho(g)^{-1}\,-\,F \in{\cal K(H}_B), & \forall g\in \Gamma. \\
\end{array}
\eqno(3.4)
$$
\end{itemize}
\end{itemize}
\end{definition}

Suppose that a covariant Fredholm representation ${\cal E}\,=\,({\cal H}_B,\varphi, \rho,F)$ of $(A,\Gamma)$ over $B$ of exponent $\lambda\geq 1$ is given. 
Denote by $\epsilon\in{\cal L}({\cal H}_B)$ the grading operator on ${\cal H}_B$. Then
$\frac{1\pm\epsilon}{2}$ are the orthogonal projections on ${\cal H}_B^\pm$. They strictly commute with the actions 
of $A$ and $\Gamma$.

\begin{definition}
A differentiable (smooth) operator homotopy joining the covariant Fredholm representations ${\cal E}_0\,=\,({\cal H},\varphi,\rho,F_0)$ and ${\cal E}_1\,=\,({\cal H},\varphi,\rho,F_1)$ of $\Gamma$ over $B$ of exponent $\lambda\geq 1$
is a family ${\cal E}_t\,=\,({\cal H}_B, \varphi, \rho, F_t),$
$t\in[0,1],$ of Fredholm representations of exponent $\lambda$ such that $t\mapsto F_t$ is continuously differentiable (smooth) with respect to the operator norm on ${\cal L(H}_B)$.
\end{definition}

Let now $\Gamma$ be a word-hyperbolic group with a fixed finite symmetric set $S$ of generators and denote by $d_S$ the associated word length metric on $\Gamma$. The Cayley-graph ${\cal G}(\Gamma,S)$ is then a geodesic $\delta$-hyperbolic metric space for some $\delta>0$. Left translation yields an isometric action of $\Gamma$ on its Cayley-graph. For $N>>0$ let $\Delta=\Delta(N)$ be the Rips complex of $(\Gamma,S)$. 
Recall that $\Delta$ is the set of all nonempty (and necessarily finite) oriented subsets of $(\Gamma, d_S)$ of diameter at most $N$. (An orientation of a finite set $T$ of order $n$ is an equivalence class of total orderings of $T$, two orderings being equivalent if they are conjugate under the action of the alternating group $A_n$.) Let $x=\{e\}$ be the base simplex of $\Delta$. The usual simplicial face maps turn the Rips complex into a simplicial set $\Delta_\bullet$, given in degree n by the oriented subsets of $n$-elements and diameter at most $N$ in $(\Gamma, d_S)$. Note that $\Delta_n\,=\,\emptyset$ for $n>>0$. The action of $\Gamma$ on the Cayley graph gives rise to an isometric and simplicial action on $\Delta$. We let $\widetilde{\Delta}$ be the augmented Rips complex obtained by adding a unique simplex of degree 0, which represents the empty subset of $\Gamma$. 
Denote finally by $\C(\widetilde{\Delta})$ the graded $\C$-linear span of $\widetilde{\Delta}$, modulo the identification $e_S+e_T=0$ if $S,T\in\widetilde{\Delta}$ have the same underlying sets but opposite orientations, and let $(\C(\widetilde{\Delta}),\,\partial)$ be the chain complex associated to the simplicial set $\widetilde{\Delta}$. The natural linear action of $\Gamma$ on $\C(\widetilde{\Delta})$ will be denoted by $\rho$. It commutes with the simplicial differential $\partial$. A basic fact about hyperbolic groups is that $(\C(\widetilde{\Delta}),\,\partial)$ is contractible provided $N$ is large enough.\\
\\
In his monumental paper \cite{La2} Vincent Lafforgue has constructed the following homotopy of
 Fredholm representations, which he uses to establish the Baum-Connes conjecture with coefficients for all word-hyperbolic groups.

\begin{theorem}
Let $\Gamma$ be a word-hyperbolic group with finite symmetric set of generators $S$ and associated word-length metric $d_S$. 
Let $s>0$. Then there exist for $N>>0$ sufficiently large
\begin{itemize}
\item a contracting chain homotopy ("parametrix") $J_x$ of $(\C(\widetilde{\Delta}),\,\partial),$ i.e.
$$\partial J_x+J_x\partial=Id_{\C(\widetilde{\Delta})},$$
\item a metric $d^\flat:\Gamma\times \Gamma\to\R_+$ satisfying $d^\flat\,-\,d_S\,=\,O(1),$
\item a Hilbert space completion ${\cal H}_{x,s}$ of $\C(\widetilde{\Delta}),$
\end{itemize}
such that the following assertions hold for $T>0$: Lafforgue's family \cite{La2}
$$
\begin{array}{ccc}
{\cal E}_{s,t}\,=\,({\cal H}_{x,s},\,\varphi,\,\rho,\,F_t),& t\in[0,T], & F_t\,=\,e^{td^\flat(-,x)}(\partial+J_x\partial J_x)e^{-td^\flat(-,x)}
\end{array}
\eqno(3.5)
$$
defines a differentiable (and in fact smooth) operator homotopy of Fredholm representations of $(\C,\Gamma)$ of exponent $\lambda=e^s$. 
\end{theorem}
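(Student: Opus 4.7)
My plan is to verify each axiom of Definition 3.1 for the family $\Eh_{s,t}$, taking for granted the auxiliary data $(\Hh_{x,s},\,d^\flat,\,J_x)$ listed in the first three bulleted items of the theorem. The construction of these objects with the correct quasi-invariance properties under $\Gamma$ is the geometric heart of \cite{La2} and is where I expect the main obstacle to lie; the verification that Lafforgue's formula then yields a Fredholm representation is essentially formal.

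The algebraic axioms reduce to the case $t = 0$ by conjugation invariance. Since $A = \C$, the representation $\varphi$ acts by scalars and the condition $[F_t,\varphi(a)] = 0$ is automatic. The $\Z/2\Z$-grading on $\Hh_{x,s}$ is induced by parity of simplicial degree, so $\partial$ and $J_x$ are odd while $\rho$ is even. For $F_t^2 = \Id$ it suffices to check $F_0^2 = \Id$ for $F_0 = \partial + J_x\partial J_x$; arranging $J_x^2 = 0$ in the construction (which one can always do), I would compute
$$
F_0^2 \,=\, (\partial J_x)^2 + (J_x\partial)^2 \,=\, \partial J_x + J_x\partial \,=\, \Id,
$$
the second equality using that $\partial J_x$ and $J_x\partial$ are complementary idempotents, which follows from $\partial J_x + J_x\partial = \Id$ together with $\partial^2 = 0$. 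Smoothness of $t \mapsto F_t$ in operator norm then follows from the exponential conjugation formula, once one knows the iterated commutators $[d^\flat(-,x),F_0]$ are bounded on $\Hh_{x,s}$; this in turn reduces to the Lipschitz property $d^\flat - d_S = O(1)$ and finite $d_S$-propagation of $F_0$.

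The substantive analytic content consists of the two remaining estimates. The growth bound $\|\rho(g)\|_{\Lh(\Hh_{x,s})} = O(e^{s\ell_S(g)})$ would be arranged by weighting the basis simplices $e_T$ in $\Hh_{x,s}$ by a function of size roughly $e^{sd_S(T,x)}$, so that left-translation by $g$ distorts weights by at most a factor $e^{s\ell_S(g)}$. The almost $\rho$-equivariance $\rho(g)F_t\rho(g)^{-1} - F_t \in \Kh(\Hh_B)$ is the crucial point and where hyperbolicity enters essentially: the $\delta$-thinness of geodesic triangles forces the parametrix $J_x$, built by geodesic retraction toward $x$, to agree with its translate $\rho(g)J_x\rho(g)^{-1}$ outside a bounded neighbourhood of the geodesic from $x$ to $g\cdot x$, so that their difference is locally of finite rank; the weighting of $\Hh_{x,s}$ together with the twist by $e^{td^\flat}$ then upgrades this to genuine compactness in operator norm. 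Establishing these quantitative estimates is the geometric content of \cite{La2}; my proof would invoke Lafforgue's construction to supply the data with the required bounds, after which the remaining axioms of Definition 3.1 and the smoothness of the homotopy follow from the formal manipulations above.
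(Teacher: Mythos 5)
Your identification of where the work lies is inverted relative to the paper. Like you, the paper imports from Lafforgue wholesale the fact that (3.5) is a Fredholm representation of $(\C,\Gamma)$ over $\C[0,T]$ of exponent $e^s$; the \emph{entire} content of the paper's proof is the one claim you dismiss as formal, namely the operator-norm continuity and differentiability of $t\mapsto F_t$. Your proposed reduction of this to boundedness of the iterated commutators $[d^\flat(-,x),F_0]$ via ``finite $d_S$-propagation of $F_0$'' contains a genuine error: $F_0=\partial+J_x\partial J_x$ does \emph{not} have finite propagation, because the parametrix $J_x$ is a contracting homotopy that pushes simplices arbitrarily far back toward the basepoint $x$ (an operator of finite propagation could not satisfy $\partial J_x+J_x\partial=\Id$ on the whole complex). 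So the commutator $[d_x^\flat,F_0]$ is not bounded for trivial reasons, and the exponential conjugation series is not controlled by any propagation bound. (Your algebra for $F_0^2=\Id$ via the complementary idempotents $\partial J_x$ and $J_x\partial$ is fine and matches Remark 3.4 and Lemma 3.6.)

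What the paper actually does is invoke Lafforgue's decomposition $J_x=\sum_{q=1}^{Q}\widetilde H_{x,q}+\sum_{r\geq1}u_{x,r}K_{x,Q}$. The finitely many terms $\partial$ and $\widetilde H_{x,q}$ do have finite propagation and are handled essentially as you suggest; but the infinite tail has unbounded propagation and is controlled only because, on the weighted completion $\Hh_{x,s}$, the $r$-th term conjugated by $e^{td_x}$ has norm $O(e^{-sr/2})$, and commutation with $d_x$ costs only a factor $(r+\mathrm{const})$, which remains summable against $e^{-sr/2}$. These are the Lipschitz estimates (3.14)--(3.21), obtained by rerunning Lafforgue's lemmas 4.21, 4.32, 4.36 and 4.40 with a modified test vector. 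Without some substitute for this decomposition-plus-decay argument, your proof of the smoothness of the homotopy --- which is the assertion actually being established here --- does not close.
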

\begin{remark}
Note that the family (3.5) differs slightly from the one considered by Lafforgue, because we use the augmented instead of the ordinary Rips complex. The operator $\varphi(1)$ is given by the orthogonal projection 
onto the span of $\C(\Delta)$. With this modification the operator $F_t$ of (3.5) satisfies $F_t^2=1$ for all $t\geq 0$.  
\end{remark}
\begin{proof}
The theorem is implicitly contained in Lafforgue's monumental paper \cite{La2}. The main result of that paper states that the family (3.5) defines a Fredholm representation of $(\C,\Gamma)$ over $\C[0,T]$ of exponent 
$\lambda=e^s$. So the only issue is to verify that the family of operators $F_t$ varies continuously (smoothly) with repsect to the operator norm on ${\cal H}_{x,s}$. The notations of Lafforgue's paper are from now on understood. 
Citations refer exclusively to \cite{La2}. The constants that come up depend on the various parameters of Lafforgue's construction, i.e.
$C=C(\delta,K,N,Q,P,M,s,B,T)$. We proceed as in the proof of his lemma 4.47.

It has to be shown that for 
$$
F\,=\,\partial+J_x\partial J_x 
\eqno(3.6)
$$ 
the operator families
$$
\begin{array}{ccc}
F_t & = & \left( e^{td_x^{\flat}}\,F\,e^{-td_x^{\flat}}\right),
\end{array}
\eqno(3.7)
$$
and
$$
\begin{array}{ccc}
\frac{d}{dt}F_t & = & \left( e^{td_x^{\flat}}\,[d_x^{\flat},F]\,e^{-td_x^{\flat}}\right)
\end{array}
\eqno(3.8)
$$
are continuous in $t$ with respect to the operator norm on ${\cal L(H}_{x,s})$.
Here $d_x^{\flat}$ denotes the diagonal operator on $\C(\Delta)$ which multiplies the basis vector $e_S$ corresponding to a Rips simplex $S\in\Delta$
with the averaged distance of the simplex $S$ from the origin $x$. Note that $d^{\flat}$ is the "continuous metric" associated to the original discrete word-length metric $d$ on $\Gamma$
\cite{La2}, section 3.5. The difference $d_x^{\flat}-d_x$ of the unbounded multiplication operators $d_x^{\flat}$ and $d_x$ is in fact a bounded operator on ${\cal H}_{x,s}$
\cite{La2}, Lemma 4.49. Thus
$$
F_t\,=\,e^{t(d_x^{\flat}-d_x)}\left(e^{td_x}Fe^{-td_x}\right)e^{-t(d_x^{\flat}-d_x)}
\eqno(3.9)
$$
and
$$
\frac{d}{dt}F_t\,=\,e^{t(d_x^{\flat}-d_x)}\,[d_x^{\flat}-d_x,\,e^{td_x}Fe^{-td_x}]\,e^{-t(d_x^{\flat}-d_x)}
$$
$$
+\,e^{t(d_x^{\flat}-d_x)}\left(e^{td_x}[d_x,F]e^{-td_x}\right)e^{-t(d_x^{\flat}-d_x)}
\eqno(3.10)
$$
because the operators $d_x^{\flat}$ and $d_x$ commute.
Now
$F=\partial+J_x\partial J_x$
where $\partial$ denotes the ($\Gamma$-equivariant) simplicial differential of the cellular chain complex $C_*(\widetilde{\Delta})$ associated to $\widetilde{\Delta}$, and 
$J_x$ is a well chosen (non equivariant) contracting chain homotopy (called a "parametrix" in \cite{La2}) of $C_*(\widetilde{\Delta})$.
According to \cite{La2}, section 3.4, 
$$
\begin{array}{ccccc}
J_x & = & \underset{q=1}{\overset{Q}{\sum}}\,\widetilde{H}_{x,q} & + & \underset{r=1}{\overset{\infty}{\sum}}\,u_{x,r}K_{x,Q}
\end{array}
\eqno(3.11)
$$
for $Q\in\N$ large, but fixed. 
The theorem is therefore implied by the norm continuity of the families 
$$
e^{td_x}\partial e^{-td_x},\,e^{td_x}\widetilde{H}_{x,q}e^{-td_x},\,e^{td_x}u_{x,r}K_{x,Q} e^{-td_x},\, 1\leq q\leq Q,\,r\geq 1,
\eqno(3.12)
$$
and
$$
e^{td_x}[d_x,\partial]\,e^{-td_x},\,e^{td_x}[d_x,\widetilde{H}_{x,q}]\,e^{-td_x},\,e^{td_x}[d_x,u_{x,r}K_{x,Q}]\,e^{-td_x},\, 1\leq q\leq Q,\,r\geq 1,
\eqno(3.13)
$$
of operators.
 
Let $P_R$ denote the orthogonal projector of ${\cal H}_{x,s}$ onto the subspace spanned by Rips simplices of distance at most $R$ from the origin $x$. Put ${\cal P}=P_P$, \cite{La2}, p.91.\\
\\
{\bf Continuity of $t\mapsto e^{td_x}\partial e^{-td_x}$:}\\
The operator $\partial$ on $\C(\Delta^p),\,p>1,$  is of propagation at most $N$ so that $P_R\partial\,=\,P_R\partial P_{R+N}$. (The case $p=1$ is left to the reader as it will not be used in the sequel.) The restriction of the 
diagonal operator $d_x$ to the image of $P_{R+N}$ is bounded so that $t\mapsto e^{td_x}{\cal P}\partial e^{-td_x}$ and 
$t\mapsto e^{td_x}{\cal P}[d_x,\partial] e^{-td_x}$ are norm continuous. 
The calculations in the proof of lemma 4.21 yield, after replacing in formula (55) the expression $\xi_{\widetilde Z}(f)$ by 
$\left(e^{t(\rho_x^1(\widetilde{Z})-\rho_x^0(\widetilde{Z}))}\,-\,e^{t'(\rho_x^1(\widetilde{Z})-\rho_x^0(\widetilde{Z}))}\right)\xi_{\widetilde Z}(f)$
 and noting that $\vert\rho_x^1(\widetilde{Z})-\rho_x^0(\widetilde{Z})\vert\leq N$ (see page 17),
the estimate
$$
\parallel e^{td_x}(1-{\cal P})\partial e^{-td_x}\,-\,e^{t'd_x}(1-{\cal P})\partial e^{-t'd_x}\parallel_{{\cal L}({\cal H}_{x,s})}
$$
$$
\leq\,C_0\,\underset{\vert \sigma\vert\leq N}{\Sup}\,\vert e^{t\sigma}-e^{t'\sigma}\vert\,
\leq\,C_1\vert t'-t\vert.
\eqno(3.14)
$$
A similar argument shows
$$
\parallel e^{td_x}(1-{\cal P})[d_x,\partial] e^{-td_x}\,-\,e^{t'd_x}(1-{\cal P})[d_x,\partial] e^{-t'd_x}\parallel_{{\cal L}({\cal H}_{x,s})} 
$$
$$
\leq\,C_1N\vert t'-t\vert.
\eqno(3.15)
$$
{\bf Continuity of $t\mapsto e^{td_x}\widetilde{H}_{x,q} e^{-td_x}$:}\\
The operator $\widetilde{H}_{x,q} $ is of finite propagation. The same argument as before shows therefore the continuity of 
$t\mapsto e^{td_x}{\cal P}\widetilde{H}_{x,q} e^{-td_x}$ and $t\mapsto e^{td_x}{\cal P}[d_x,\widetilde{H}_{x,q}] e^{-td_x}$.
The calculations in the proof of lemma 4.36 yield, after replacing in formula (95) the expression $\xi_{\widetilde Z}(f)$ by 
$\left(e^{t(\rho_x^1(\widetilde{Z})-\rho_x^0(\widetilde{Z}))}\,-\,e^{t'(\rho_x^1(\widetilde{Z})-\rho_x^0(\widetilde{Z}))}\right)\xi_{\widetilde Z}(f)$
 and noting that\\
$\vert\rho_x^1(\widetilde{Z})-\rho_x^0(\widetilde{Z})\vert\leq (q+2)N$ (see (84) and top of page 122), the estimate
$$
\parallel e^{td_x}(1-{\cal P})\widetilde{H}_{x,q} e^{-td_x}\,-\,e^{t'd_x}(1-{\cal P})\widetilde{H}_{x,q} e^{-t'd_x}\parallel_{{\cal L}({\cal H}_{x,s})}
$$
$$
\leq\,C_2\,\underset{\vert \sigma\vert\leq (q+2)N}{\Sup}\,\vert e^{t\sigma}-e^{t'\sigma}\vert\,\,\leq\,C_3\,\vert t'-t\vert.
\eqno(3.16)
$$
A similar argument shows
$$
\parallel e^{td_x}(1-{\cal P})[d_x,\widetilde{H}_{x,q}] e^{-td_x}\,-\,e^{t'd_x}(1-{\cal P})[d_x,\widetilde{H}_{x,q}] e^{-t'd_x}\parallel_{{\cal L}({\cal H}_{x,s})}
$$
$$
\leq\, C_3(q+2)N\vert t'-t\vert.
\eqno(3.17)
$$
\\
{\bf Continuity of $e^{td_x}u_{x,r}K_{x,Q} e^{-td_x},\,r\geq 1$:}\\
One deduces from the calculations in the proof of 4.32, after replacing in formula (80) the expression $\xi_{Z}(f)$ by
 $\left(e^{t(\rho_x(U)-\rho_x^0(Z))}\,-\,e^{t'(\rho_x(U)-\rho_x^0(Z))}\right)\xi_{Z}(f)$, and noting 
 that $\vert r+\rho_x(U)-\rho_x^0(Z)\vert\leq QF+N$ (see top of pages 107 and 122),
the estimate
$$
\parallel e^{td_x}{\cal P}u_{x,r}K_{x,Q} e^{-td_x}\,-\,e^{t'd_x}{\cal P} u_{x,r}K_{x,Q} e^{-t'd_x}\parallel_{{\cal L}({\cal H}_{x,s})}
$$
$$
\leq\,C_4\,e^{-\frac{s}{2}r}\,\underset{\vert r+\sigma\vert\leq QF+N}{\Sup}\,\vert e^{t\sigma}-e^{t'\sigma}\vert\,\,\leq\, C_5\,e^{-\frac{s}{2}r}\,(r+C_6)\vert t'-t\vert.
\eqno(3.18)
$$
The same reasoning based on the calculations in the proof of 4.40 and the substitution 
$\xi_{\widetilde Z}(f)\mapsto \left(e^{t(\rho_x^1(\widetilde{Z})-\rho_x^0(\widetilde{Z}))}\,-\,e^{t'(\rho_x^1(\widetilde{Z})-\rho_x^0(\widetilde{Z}))}\right)\xi_{\widetilde Z}(f)$ 
in formula (112) yields, after noting that $\vert r+\rho_x^1(\widetilde{Z})-\rho_x^0(\widetilde{Z})\vert \leq\,QF+N$ (see (98) and top of page 122), 
$$
\parallel e^{td_x}(1-{\cal P})u_{x,r}K_{x,Q} e^{-td_x}\,-\,e^{t'd_x}(1-{\cal P})u_{x,r}K_{x,Q} e^{-t'd_x}\parallel_{{\cal L}({\cal H}_{x,s})}
$$
$$
 \leq\, C_7\,e^{-\frac{s}{2}r}\,(r+C_8)\vert t'-t\vert.
\eqno(3.19)
$$
Similar arguments show finally
$$
\parallel e^{td_x}[d_x,{\cal P}u_{x,r}K_{x,Q}]e^{-td_x}\,-\,e^{t'd_x}[d_x,{\cal P}u_{x,r}K_{x,Q}] e^{-t'd_x}\parallel_{{\cal L}({\cal H}_{x,s})}
$$
$$
 \leq\, C_5\,e^{-\frac{s}{2}r}\,(r+C_6)(r+QF+N)\vert t'-t\vert.
\eqno(3.20)
$$
and
$$
\parallel e^{td_x}[d_x,(1-{\cal P})u_{x,r}K_{x,Q}]e^{-td_x}\,-\,e^{t'd_x}[d_x,(1-{\cal P})u_{x,r}K_{x,Q}] e^{-t'd_x}\parallel_{{\cal L}({\cal H}_{x,s})}
$$
$$
 \leq\, C_7\,e^{-\frac{s}{2}r}\,(r+C_8)(r+QF+N)\vert t'-t\vert.
\eqno(3.21)
$$

The assertion of the theorem is then a direct consequence of the estimates (3.14) to (3.21) above. The proof implies that Lafforgue's operator homotopy is actually smooth, i.e. $t\mapsto F_t\,\in\,{\cal C}^\infty([0,T],{\cal L}({\cal H}_{x,s}))$.
\end{proof}

\begin{lemma}(Lafforgue,\cite{La2})
Fix $s>0$ and let ${\cal E}_{s,t}\,=\,({\cal H}_{x,s},\,\varphi,\,\rho,\,F_t), t\geq 0,$ be Lafforgue's Fredholm representation (see (3.5)). Then for $T>>0$ sufficiently large
$$
{\cal E}_T\,=\,(\ell^2(\widetilde{\Delta}),\,\varphi,\,\rho,\,F_T)
$$
is a Fredholm representation of $(\C,\Gamma)$ of exponent 1
which is piecewise $C^1$-operator homotopic to the equivariant Kasparov module  $\gamma_{KS}$ of \cite{KaSk} representing Kasparov's $\gamma$-element
$$
\gamma \in KK_\Gamma(\C,\C).
$$ 
\end{lemma}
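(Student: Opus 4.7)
The plan is to transport Lafforgue's smooth homotopy from Theorem~3.3, which lives on the weighted completion ${\cal H}_{x,s}$ and moves through Fredholm representations of exponent $e^s$, over to the flat completion $\ell^2(\widetilde{\Delta})$ (where the representation becomes unitary, i.e.\ of exponent $1$), and then to append a final short $C^1$-segment ending at the Kasparov--Skandalis module $\gamma_{KS}$.

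The first step is to reinterpret $F_T$ on $\ell^2(\widetilde{\Delta})$. The conjugation by $e^{Td^\flat(-,x)}$ built into the definition (3.5) is precisely designed so that, for $T$ taken sufficiently large, multiplication by $e^{-\frac{T}{2}d^\flat(-,x)}$ intertwines the weighted norm on ${\cal H}_{x,s}$ with the flat $\ell^2$-norm up to a bounded invertible factor; here Lafforgue's uniform estimate $d^\flat - d_S = O(1)$ from Theorem~3.3 is essential. Under this rescaling, the representation $\rho$, whose growth on ${\cal H}_{x,s}$ was $O(e^{s\ell_S})$, becomes uniformly bounded on $\ell^2(\widetilde{\Delta})$: the weight of the source cancels the exponential growth of $\rho$, turning it into a representation of exponent $1$. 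The operator $F_T$ extends to a bounded odd self-adjoint involution of $\ell^2(\widetilde{\Delta})$ (thanks to $F_T^2 = 1$ in ${\cal H}_{x,s}$ via the augmentation of Remark~3.4), and the compactness of $[F_T,\varphi(a)]$ and of $\rho(g)F_T\rho(g)^{-1} - F_T$ obtained in Theorem~3.3 is preserved because compactness is invariant under bounded invertible similarities. This establishes the first assertion of the lemma.

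For the homotopy to $\gamma_{KS}$, recall that the module of \cite{KaSk} is carried by $\ell^2(\widetilde{\Delta})$ with the simplicial representations of $\C$ and $\Gamma$ and with an odd self-adjoint involution $F_{KS}$ built from a bounded functional calculus of the simplicial boundary. After the identification of the first step, $F_T$ and $F_{KS}$ are two bounded odd self-adjoint involutions on $\ell^2(\widetilde{\Delta})$ whose difference is compact (the contributions coming from the parametrix $J_x$ and from the transported twist are of finite propagation and operate trivially at infinity on $\widetilde{\Delta}$). The projections $P_T = \frac{1+F_T}{2}$ and $P_{KS} = \frac{1+F_{KS}}{2}$ therefore differ by a compact operator, and a standard Riesz-projector argument produces a $C^1$-path of projections $P_u$, $u\in[0,1]$, connecting them; the associated $F_u = 2P_u - 1$ satisfy $F_u^2 = 1$ throughout, and the Fredholm-representation axioms of exponent~$1$ are preserved because the perturbation remains compact. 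Concatenating this affine segment with Lafforgue's smooth homotopy on $[0,T]$, pushed forward to the $\ell^2$-picture, yields the required piecewise $C^1$-operator homotopy; the break at $t=T$, where one changes completion, is precisely the reason why only piecewise (rather than global) $C^1$-regularity can be claimed.

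The main technical hurdle is the rescaling step: one must verify, from the precise weighted structure of ${\cal H}_{x,s}$ scattered throughout \cite{La2}, that conjugation by $e^{-\frac{T}{2}d^\flat(-,x)}$ really implements a bounded invertible identification with $\ell^2(\widetilde{\Delta})$ under which all axioms of Definition~3.1 pass with exponent $1$. Once this transfer is in place, everything else is formal: the straightening via Riesz projectors, the compactness of the remaining perturbations, and the gluing of the two homotopy segments are standard consequences of the uniform estimates provided by Theorem~3.3.
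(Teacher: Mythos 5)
The paper does not actually prove this lemma: its entire ``proof'' is the citation ``This is proved in \cite{La2}, section 5.'' What you are attempting to reconstruct blind is therefore precisely the hard analytic content of Lafforgue's work, and your sketch has a genuine gap at the decisive point.

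The gap is your second step. You assert, essentially in a parenthesis, that $F_T-F_{KS}$ is compact on $\ell^2(\widetilde{\Delta})$ because the relevant contributions ``are of finite propagation and operate trivially at infinity.'' This is not a proof; comparing Lafforgue's operator $\partial+J_x\partial J_x$ (conjugated by the exponential weight) with the Kasparov--Skandalis operator, which is built by an entirely different geometric construction, is exactly what occupies Lafforgue's section 5, and he does it by exhibiting an explicit (piecewise) homotopy, not by a compact-perturbation argument. Moreover, even granting compactness, your ``standard Riesz-projector argument'' is invalid: two projections whose difference is compact need not be joined by any norm-continuous path of projections (there is an essential-codimension/index obstruction --- a rank-one projection and $0$ differ by a compact operator), and the Riesz functional-calculus path requires $\|P_T-P_{KS}\|<1$, which compactness does not give. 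This is not a pedantic point: if compactness of the difference plus a Riesz path sufficed, one could run the same argument in situations where $\gamma\neq 1$, so the step that you treat as ``formal'' is where the actual mathematics lives. Your first step is also only heuristic --- the passage from ${\cal H}_{x,s}$ to $\ell^2(\widetilde{\Delta})$ is not literally a conjugation by $e^{-\frac{T}{2}d^\flat(-,x)}$ identifying the two completions; rather, one must show that the same densely defined operator $F_T$ is bounded, with the required compactness properties, for the flat completion once $T$ is large, which again requires Lafforgue's quantitative estimates. The honest conclusion is that this lemma cannot be proved by soft arguments from Theorem 3.3 alone; it must be quoted from \cite{La2} (as the paper does) or its proof reproduced in detail.
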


\begin{proof}
This is proved in \cite{La2}, section 5.
\end{proof}
\begin{lemma}(Lafforgue,\cite{La1})
Let ${\cal E}_{s,0}=({\cal H}_{x,s},\,\varphi,\,\rho,\,F),\,F=\partial+J_x\partial J_x,$ be the Fredholm representation introduced in (3.5). 
Then 
$$
{\cal E}_{s,0,t}\,=\,({\cal H}_{x,s},\,\varphi,\,\rho_t,\,F),\,t\in[0,1],
$$
$$
\rho_t\,=\,(1-t)\rho\,+\,t\left((J_x\partial)\rho(J_x\partial)+(\partial J_x)\rho(\partial J_x)\right),
$$
defines a Fredholm representation of $(\C,\Gamma)$ over $C^1([0,1])$ of exponent $\lambda$ 
such that ${\cal E}_{s,0,0}={\cal E}_{s,0}$ and such that the operator $F$ strictly commutes with $\rho_1$ and $\varphi$.
\end{lemma}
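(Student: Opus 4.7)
Set $e := J_x\partial$ and $f := \partial J_x$. From the parametrix relation $\partial J_x + J_x\partial = \mathrm{Id}$ and $\partial^2 = 0$ one reads off $e+f = \mathrm{Id}$, $e^2 = e$, $f^2 = f$, and $ef = fe = 0$, so $e,f$ are complementary algebraic idempotents. Also $e\partial = \partial f = 0$, whence $\partial e = f\partial = \partial$. Using $F^2 = 1$ (from the remark following Theorem~3.3), I would next establish the clean relations
$$
Fe = fF = \partial, \qquad Ff = eF = J_x\partial J_x.
$$
For example, $Fe = (\partial + J_x\partial J_x)(J_x\partial) = f\partial + J_xf\cdot e = \partial + J_x(fe) = \partial$, using $f\partial = \partial$ and $fe = 0$; the other three are analogous, with $F^2=1$ taking care of the $J_x^2$-error terms that would otherwise appear.

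With these identities in hand, the routine parts of the Fredholm-representation axioms follow quickly. The family $\rho_t$ is affine in $t$, hence $C^\infty$ as a map $[0,1]\to\mathcal{L}(\mathcal{H}_{x,s})$; each $\rho_t(g)$ is even (since $e,f$ preserve the grading) and has norm bounded by a constant multiple of $\lambda^{\ell_S(g)}$ uniformly in $t$. Covariance with $\varphi$ and $F^2=1$ carry over from $\mathcal{E}_{s,0}$. The almost-intertwining of $F$ and $\rho_t$ follows from the decomposition
$$
[F,\rho_t(g)] = (1-t)[F,\rho(g)] + t[F,e\rho(g)e + f\rho(g)f],
$$
the first term being compact because $\mathcal{E}_{s,0}$ is a Fredholm representation, and the second vanishing identically by the calculation below. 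The endpoint $\mathcal{E}_{s,0,0} = \mathcal{E}_{s,0}$ is immediate from $\rho_0 = \rho$.

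The heart of the lemma is the strict commutation $[F,\rho_1] = 0$. Substituting the relations $Fe = fF = \partial$ and $Ff = eF = J_x\partial J_x$,
$$
F\rho_1 - \rho_1 F = \partial\rho e + (J_x\partial J_x)\rho f - e\rho(J_x\partial J_x) - f\rho\partial.
$$
The $\Gamma$-equivariance of the simplicial differential, $\partial\rho(g) = \rho(g)\partial$, combined with $\partial e = f\partial = \partial$, reduces the first and last terms each to $\partial\rho$, so they cancel. For the middle pair, both $(J_x\partial J_x)\rho f$ and $e\rho(J_x\partial J_x)$ simplify to $J_x\partial\rho J_x$ (using the same equivariance together with $\partial J_x\partial = f\partial = \partial$ and $J_x\partial J_x\partial = J_x\partial$), and hence they also cancel. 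Therefore $[F,\rho_1] = 0$.

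The main obstacle is arranging the four terms of the expanded commutator so that after simplification every occurrence of $\rho$ is adjacent to a $\partial$ rather than to a $J_x$. This is exactly the purpose of the choice $e\rho e + f\rho f$: the simplicial differential $\partial$ is $\Gamma$-equivariant whereas the parametrix $J_x$ is not, and this symmetrization is engineered so that, after expansion via the identities $Fe, Ff, eF, fF$ above, every surviving $J_x$ is separated from $\rho$ by a $\partial$. An off-diagonal symmetrization such as $e\rho f + f\rho e$ would instead leave $\rho$ in direct contact with $J_x$'s, and the cancellation would fail.
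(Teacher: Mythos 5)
Your argument is correct and is essentially the paper's own proof written out in full: the paper merely observes that $e=J_x\partial$ and $f=\partial J_x$ are complementary idempotents commuting with $\varphi(\C)$ and $\rho_1(\Gamma)$ and refers to Lemma 1.4.2 of \cite{La1} for the rest, whereas you verify the key identities $Fe=fF=\partial$, $Ff=eF=J_x\partial J_x$ and the resulting cancellation in $[F,\rho_1]$ explicitly. Two small points: the $J_x^2$-terms in those identities are killed by $ef=fe=0$ (as your own computation in fact shows), not by $F^2=1$; and you should also record that each $\rho_t$ is multiplicative --- a convex combination of two representations is not one in general, but here the same equivariance identities give $\rho(g)\rho_1(h)+\rho_1(g)\rho(h)=\rho(g)(J_x\partial+\partial J_x)\rho(h)+\rho_1(gh)=\rho(gh)+\rho_1(gh)$, whence $\rho_t(g)\rho_t(h)=\rho_t(gh)$ for all $t$.
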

\begin{proof}
This follows from the fact that $\partial J_x$ and $J_x\partial=1-\partial J_x$ are idempotent operators which commute with $\varphi(\C)$ and $\rho_1(\Gamma)$ (see Lemma 1.4.2 of \cite{La1}).
\end{proof}

\section{Unconditional Banach algebras and\\ characteristic homomorphisms}

\begin{definition} \cite{La1}
Let $(\Gamma,S)$ be a finitely generated discrete group with finite symmetric set of generators $S$. A Banach algebra $A\Gamma\subset C_r^*\Gamma$ containing the group ring $\C\Gamma$ as dense subalgebra is called {\bf unconditional} \cite{La1} if 
$$
\begin{array}{ccc}
\vert a_g\vert\,\leq\,\vert b_g\vert,\,\forall g\in\Gamma & \Rightarrow & \parallel \underset{g}{\sum} a_gu_g\parallel\,\leq\,\parallel \underset{g}{\sum} b_gu_g\parallel.
\end{array}
\eqno(4.1)
$$
\end{definition}

\begin{definition}
Let $\pi:\Gamma\to Isom(A)$  be an isometric linear action of $\Gamma$ on the Banach algebra $A$.
Fix $\lambda\geq 1$. We denote ${\cal A}_\lambda(\Gamma,A)$ the Banach algebra obtained by completion 
of the algebraic crossed product $A\rtimes\Gamma$ with respect to the submultiplicative seminorm 
$$
\parallel\underset{g}{\sum}\,a_gu_g\parallel_{{\cal A}_\lambda(\Gamma,B)}\,=\,\parallel\underset{g}{\sum}\lambda^{\ell_S(g)}\parallel a_g\parallel_A\,u_g\parallel_{{\cal A}\Gamma}.
\eqno(4.2)
 $$
\end{definition}
The Banach algebra corresponding to $\lambda=1$ will simply be denoted ${\cal A}(\Gamma,A)$.\\
\\
Recall that the Cuntz-algebra $Q_RA$ of a Banach algebra $A$ \cite{Pu5} is the completion of the algebraic free product $A*A$ with respect to the largest Banach algebra norm 
such the canonical inclusions $i_{0,1}:A\to QA$ are bounded of norm $\leq R$. The Cuntz algebra fits into a doubly split extension of Banach algebras
$$
0\,\to\,q_RA\,\to\,Q_RA\,\overset{id*id}{\longrightarrow}\,A\,\to\,0
\eqno(4.3)
$$
If $f,g:A\to B$ are homomorphisms of Banach algebras of norm $\leq R$, which coincide modulo an ideal $I\subset B$, then the restriction of 
$$
Q_R(f,g):Q_RA\to B
\eqno(4.4)
$$ 
to the Cuntz ideal yields a homomorphism
$$
q_R(f,g):q_RA\to I.
\eqno(4.5)
$$
If $f,g:A\to B$ are homomorphisms which are orthogonal to each other in the sense that 
$$
f(A)g(A)=g(A)f(A)=0,
\eqno(4.6)
$$
then $f+g$ is a homomorphism of Banach algebras as well and 
$$
q_R(f+g,g)\,=\,q_R(f,0)
\eqno(4.7)
$$
for $R>>0$.

\begin{deflem}
Let ${\cal E}=({\cal H}_B,\,\varphi,\,\rho,\,F)$ be a Fredholm representation of $(A,\Gamma)$ over $B$ of exponent $\lambda$. Let $\epsilon\in{\cal L(H}_B)$ be the associated grading operator. 

\begin{itemize}
\item[a)]
Define algebra homomorphisms
$$
\psi_0:A\rtimes\Gamma\to{\cal L}({\cal H}_B)\otimes\C\Gamma,\,a_gu_g\mapsto\frac{1+\epsilon}{2}\varphi(a_g)\rho(g)\frac{1+\epsilon}{2}\otimes u_g,
\eqno(4.8)
$$
and
$$
\psi_1:A\rtimes\Gamma\to{\cal L}({\cal H}_B)\otimes\C\Gamma,\,a_gu_g\mapsto F\frac{1-\epsilon}{2}\varphi(a_g)\rho(g)\frac{1-\epsilon}{2}F\otimes u_g.
\eqno(4.9)
$$
These extend to bounded homomorphisms of Banach algebras
$$
\psi_{0},\psi_1\,:{\cal A}_\lambda(\Gamma, A)\to{\cal A}(\Gamma,{\cal L}({\cal H}_B))
\eqno(4.10)
$$
(Here we view ${\cal L}({\cal H}_B)$ as Banach algebra equipped with the trivial $\Gamma$ action.)
\item[b)]
The restriction of $\psi_0*\psi_1:Q_R({\cal A}_\lambda(\Gamma,A))\to{\cal A}(\Gamma,{\cal L}({\cal H}_B))$ to the Cuntz ideal $q_R({\cal A}_\lambda(\Gamma,A))$ gives rise to a bounded homomorphism
$$
\Psi_{{\cal E}}=q_R(\psi_0,\psi_1):q_R({\cal A}_\lambda(\Gamma,A))\to{\cal A}(\Gamma,{\cal K}({\cal H}_B)),\,\,\,R>>0,
\eqno(4.11)
$$
of Banach algebras, called the {\bf characteristic homomorphism} associated to the Fredholm representation $\cal E$.
\end{itemize}
\end{deflem}

This is clear from the definitions.

\begin{lemma}
Let ${\cal E}(t)=({\cal H}_B,\,\varphi_t,\,\rho_t,\,F_t),\,t\in [0,1],$ be a family of Fredholm representations 
of $(\Gamma,A)$ over $B$ of exponent $\lambda$. Suppose that $\widetilde{F}: t\mapsto F_t,$
$\widetilde{\rho}:(t,g)\mapsto \lambda^{-\ell_S(g)}\rho_t(g),$ and $\widetilde{\varphi}:(t,a)\mapsto \varphi_t(a)$ are continuously differentiable in the sense that
$\widetilde{F}\in C^1([0,1],{\cal L}({\cal H}_B)),$
$\widetilde{\rho}\in C^1([0,1],\ell^\infty(\Gamma,{\cal L}({\cal H}_B))),$
and $\widetilde{\varphi}\in C^1(I,{\cal L}(A,{\cal L}({\cal H}_B))),$
respectively. Then 
$$
\Psi_{{\cal E}(t)}:q_R({\cal A}_\lambda(\Gamma,A))\to C^1( [0,1], {\cal A}(\Gamma,{\cal K}({\cal H}_B))),\,\,\,R>>0,
\eqno(4.12)
$$
is bounded and yields a differentiable homotopy between the characteristic homomorphisms associated to ${\cal E}_0$ and ${\cal E}_1$. 
\end{lemma}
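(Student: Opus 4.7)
The plan is to carry out the construction of Definition/Lemma 4.3 uniformly in $t$, treating $(\varphi_t,\rho_t,F_t)$ as coefficients of a $C^1$-family of homomorphisms rather than isolated data. Concretely, I will define for $i\in\{0,1\}$ the family versions of (4.8), (4.9) by
$$
\widetilde{\psi}_i:\,a_gu_g\longmapsto\bigl(t\mapsto \psi_{i,t}(a_gu_g)\bigr)
$$
and upgrade part (a) of 4.3 to the statement that each $\widetilde{\psi}_i$ extends to a bounded homomorphism
$$
\widetilde{\psi}_i\,:\,{\cal A}_\lambda(\Gamma,A)\,\longrightarrow\, C^1\bigl([0,1],\,{\cal A}(\Gamma,{\cal L}({\cal H}_B))\bigr).
$$
Everything else in the lemma will follow formally from this.

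The key estimate bounds the $C^1([0,1],{\cal L}({\cal H}_B))$-norm of $t\mapsto\psi_{i,t}(a_gu_g)$ by $C\|a_g\|_A\lambda^{\ell_S(g)}$, uniformly in $g$; summation over $g$ and submultiplicativity then yield boundedness with respect to the weighted norm (4.2). The sup-norm part of this estimate is the pointwise bound of 4.3(a) together with the uniform bounds on $\widetilde\varphi$, $\widetilde F$ and $\widetilde\rho$ provided by continuity. For the derivative, applying the product rule to (4.8) and (4.9) produces sums of terms in which one factor is replaced by its $t$-derivative. The essential observation is that
$$
\tfrac{d}{dt}\rho_t(g)\,=\,\lambda^{\ell_S(g)}\tfrac{d}{dt}\widetilde{\rho}(t,g),
$$
so that the $\lambda^{\ell_S(g)}$-growth of $\dot\rho_t(g)$ is governed by the uniform-in-$g$ bound $\|\partial_t\widetilde\rho\|_\infty<\infty$, which is precisely the content of the hypothesis $\widetilde\rho\in C^1([0,1],\ell^\infty(\Gamma,{\cal L}({\cal H}_B)))$. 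Combined with $\|\widetilde\varphi'\|_\infty,\|\widetilde F'\|_\infty<\infty$ this yields the desired estimate, and norm-continuity of the derivative is then immediate from the $C^1$-assumption.

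With $\widetilde\psi_0$ and $\widetilde\psi_1$ bounded of common norm $\leq R$ for $R$ sufficiently large, the universal property of the Banach free product $Q_R$ (cf.\ (4.3)) produces a bounded homomorphism $Q_R(\widetilde\psi_0,\widetilde\psi_1):Q_R({\cal A}_\lambda(\Gamma,A))\to C^1([0,1],{\cal A}(\Gamma,{\cal L}({\cal H}_B)))$, whose restriction to the Cuntz ideal is $\Psi_{{\cal E}(t)}$. That this restriction actually lands in the subalgebra $C^1([0,1],{\cal A}(\Gamma,{\cal K}({\cal H}_B)))$ follows from the pointwise statement 4.3(b) applied at every $t$, together with the standard fact that a $C^1$-Banach-space valued function taking values in a closed subspace is $C^1$ into that subspace. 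Evaluation at $t=0,1$ recovers $\Psi_{{\cal E}_0}$ and $\Psi_{{\cal E}_1}$ by construction, giving the differentiable homotopy asserted in the lemma.

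The main obstacle is purely bookkeeping: the weight $\lambda^{\ell_S(g)}$ must be tracked coherently across source, target, and $t$-derivatives. The source ${\cal A}_\lambda$ carries this weight, the target ${\cal A}$ does not, and the $\ell^\infty(\Gamma,\cdot)$-valued $C^1$-regularity of $\widetilde\rho$ is exactly the hypothesis that absorbs the discrepancy both at the level of values and at the level of derivatives. Once this matching is laid out explicitly, the estimates reduce to routine applications of the triangle inequality and the product rule.
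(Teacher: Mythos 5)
Your proposal is correct and follows essentially the same route as the paper: build the family versions of (4.8)--(4.9), verify boundedness against the weighted norm (4.2) using the $C^1$-hypotheses (with the $\ell^\infty(\Gamma,\cdot)$-regularity of $\widetilde\rho$ absorbing the weight $\lambda^{\ell_S(g)}$ at the level of values and derivatives), and pass to the Cuntz ideal. The only cosmetic difference is that the paper factors through ${\cal A}(\Gamma,C^1([0,1],{\cal K}({\cal H}_B)))$ and then the tautological homomorphism to $C^1([0,1],{\cal A}(\Gamma,{\cal K}({\cal H}_B)))$, whereas you map into the latter algebra directly.
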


\begin{proof}
If ${\cal E}_t=({\cal H}_B,\,\varphi_t,\,\rho_t,\,F_t),\,t\in [0,1],$ is a differentiable family as described in the assumptions of the lemma, then formulas (4.8), (4.9) define maps
$$
\psi_0,\psi_1:A\rtimes\Gamma\to C^1([0,1],{\cal L}({\cal H}_B))\otimes\C\Gamma
\eqno(4.13)
$$ 
which extend to bounded homomorphisms 
$$
\psi_0,\psi_1:{\cal A}_\lambda(\Gamma, A)\to{\cal A}(\Gamma,C^1([0,1],{\cal L}({\cal H}_B)))
\eqno(4.14)
$$ 
and 
$$
q_R(\psi_0,\psi_1):q_R({\cal A}_\lambda(\Gamma,A))\to{\cal A}(\Gamma,C^1([0,1],{\cal K}({\cal H}_B))),\,\,\,R>>0.
\eqno(4.15)
$$ 
Composition with the tautological bounded homomorphism 
$$
{\cal A}(\Gamma,C^1([0,1],{\cal K}({\cal H}_B)))\to C^1([0,1],{\cal A}(\Gamma,{\cal K}({\cal H}_B)))
\eqno(4.16)
$$ 
yields the claim.
\end{proof}

The previous lemma applies in particular to the operator homotopies of 3.3 and the more general homotopy of 3.6.

\begin{theorem}
Let $\Gamma$ be a word-hyperbolic group and let ${\cal A}\Gamma$ be an unconditional Banach algebra over $\Gamma$.
Let $\gamma_{KS}$ be the Fredholm module(Fredholm representation of exponent 1) of \cite{KaSk} representing the $\gamma$-element $\gamma\in KK_\Gamma(\C,\C)$ and let 
$$
\Psi_{\gamma_{KS}}:\,q({\cal A}\Gamma)\,\to\,{\cal A}(\Gamma,{\cal K}(\ell^2(\widetilde{\Delta}))
\eqno(4.17)
$$ 
be the associated characteristic homomorphism (see (4.11)). Then for every $\lambda>1$ there exists $R>>0$ such that the composition 
$$
q_R({\cal A}_\lambda\Gamma)\,\to\,q_R({\cal A}\Gamma)\,\overset{\Psi_{\gamma_{KS}}}{\longrightarrow}\,{\cal A}(\Gamma,{\cal K}(\ell^2(\widetilde{\Delta})))
\eqno(4.18)
$$
is piecewise $C^1$-homotopic to the composition of homomorphisms 
$$
q_R({\cal A}_\lambda\Gamma)\overset{q(id,0)}{\longrightarrow}{\cal A}_\lambda\Gamma\overset{i_p}{\longrightarrow}
{\cal A}(\Gamma,{\cal K}(\ell^2(\widetilde{\Delta}))),\,\,\,i_p(u_g)\,=\,p\otimes u_g,
\eqno(4.19)
$$
where $p$ is a rank one projection. 
\end{theorem}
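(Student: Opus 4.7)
The plan is to construct the required piecewise $C^1$-homotopy by concatenating three homotopies of Fredholm representations (from Lemma 3.4, Theorem 3.3 and Lemma 3.5) and then identifying the endpoint characteristic homomorphism with $i_p \circ q(id, 0)$ via the orthogonality identity (4.7). First, I would apply Lemma 3.4 together with Lemma 4.4 to replace $\Psi_{\gamma_{KS}}$ (restricted to $q_R({\cal A}_\lambda \Gamma)$) by the characteristic homomorphism of Lafforgue's Fredholm representation ${\cal E}_T = (\ell^2(\widetilde{\Delta}), \varphi, \rho, F_T)$ for $T >> 0$. Next, Theorem 3.3 produces a smooth operator homotopy $F_t$, $t \in [0, T]$, on the weighted Hilbert space ${\cal H}_{x,s}$ with $s = \log \lambda$, which Lemma 4.4 lifts to a smooth homotopy from the characteristic homomorphism of ${\cal E}_T$ to that of ${\cal E}_{s, 0} = ({\cal H}_{x,s}, \varphi, \rho, \partial + J_x \partial J_x)$. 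The hypothesis $\lambda > 1$ is essential here because Lafforgue's estimates only control exponent $\lambda = e^s > 1$; this is precisely why one must pass to the unconditional completion ${\cal A}_\lambda \Gamma$ rather than ${\cal A}\Gamma$.

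Third, Lemma 3.5 supplies a $C^1$-homotopy of the representation from $\rho$ to $\rho_1 = (J_x\partial)\rho(J_x\partial) + (\partial J_x)\rho(\partial J_x)$, at the end of which the operator $F = \partial + J_x \partial J_x$ strictly commutes with both $\rho_1$ and $\varphi$; Lemma 4.4 once again lifts this to a homotopy of characteristic homomorphisms. After concatenating the three pieces, one is left to identify the final endpoint $q_R(\psi_0, \psi_1)$ of formula (4.11), with $\psi_0, \psi_1$ defined by (4.8)--(4.9) using $\rho_1$ in place of $\rho$, with $i_p \circ q(id, 0)$.

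In this endpoint, the strict commutation of $F$ with $\varphi$ and $\rho_1$ collapses the discrepancy between $\psi_0$ and $\psi_1$ to a single rank-one contribution supported on the augmentation summand $\C e_\emptyset \subset \C(\widetilde{\Delta})$. Concretely, taking $p$ to be the orthogonal projection onto $\C e_\emptyset$, one shows that $\psi_1 - \psi_0 = i_p$ as elements of ${\cal A}(\Gamma, {\cal L}({\cal H}_{x,s}))$, and that $\psi_0$ is orthogonal to $i_p$ in the sense of (4.6). Formula (4.7), combined with the natural swap symmetry of $q_R(-,-)$, then rewrites $q_R(\psi_0, \psi_1)$ as $q_R(i_p, 0) = i_p \circ q(id, 0)$, closing the homotopy chain.

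The main obstacle is precisely this concluding identification: one must verify that $\psi_1 - \psi_0$ is concentrated on the augmentation direction, equals the rank-one map $i_p$ for a rank-one projection $p$, and that the orthogonality hypothesis of (4.7) holds. A secondary technical issue is reconciling the two Hilbert completions $\ell^2(\widetilde{\Delta})$ (used for $\gamma_{KS}$ and ${\cal E}_T$) and ${\cal H}_{x,s}$ (used in Theorem 3.3 and Lemma 3.5): both contain $\C(\widetilde{\Delta})$ as a dense subspace, and the three homotopies of characteristic homomorphisms must be glued through this common algebraic substructure to yield a single piecewise $C^1$-homotopy into the target ${\cal A}(\Gamma, {\cal K}(\ell^2(\widetilde{\Delta})))$.
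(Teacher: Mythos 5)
Your overall route is the paper's: concatenate the homotopy of Lemma 3.5 (from $\gamma_{KS}$ to ${\cal E}_T$), Lafforgue's operator homotopy of Theorem 3.3 (from ${\cal E}_{s,T}$ to ${\cal E}_{s,0}$) and the representation homotopy of Lemma 3.6 (to ${\cal E}_{s,0,1}$), lift each to characteristic homomorphisms via Lemma 4.4, reconcile the completions $\ell^2(\widetilde{\Delta})$ and ${\cal H}_{x,s}$ (the paper does this by identifying all completions with a fixed model, noting the result is well defined up to inner automorphisms and that conjugate homomorphisms into ${\cal K}$ are smoothly homotopic), and then compute the endpoint using (4.7). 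So the architecture is fine, and you correctly flag the endpoint identification as the crux.

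But that identification is carried out incorrectly. First, the "natural swap symmetry of $q_R(-,-)$" you invoke does not exist in the form you need: $q_R(f,g)$ and $q_R(g,f)$ differ by the flip automorphism of $q_RA$, which is not homotopic to the identity --- it implements inversion of the associated class. So if you really had $\psi_1=\psi_0+i_p$, (4.7) would give $q(\psi_1,\psi_0)=q(i_p,0)$, and swapping back to $q(\psi_0,\psi_1)$ would produce the inverse of the desired homomorphism, not the homomorphism itself; the theorem as stated would not follow. Second, with the grading for which the module represents $\gamma$, the relation in fact goes the other way and the rank-one discrepancy sits elsewhere: since $\varphi(1)=1-p_\emptyset$ and $F$ strictly intertwines $\rho'$, one finds $\psi_1(u_g)=\frac{1+\epsilon}{2}\bigl(F\varphi(1)F\bigr)\rho'(g)\frac{1+\epsilon}{2}\otimes u_g$ with $F\varphi(1)F=1-Fp_\emptyset F$, and $Fp_\emptyset F=J_x\partial\vert_{\C(\Delta_1)}=p_x$ is the rank-one idempotent on $\C(\Delta_1)=\C(\Gamma)$ with image $\C e_x$ and kernel the augmentation ideal --- not the orthogonal projection onto $\C e_\emptyset$. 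After compressing by $\frac{1+\epsilon}{2}$ only one of $p_\emptyset$, $p_x$ survives, and with the correct parity one gets $\psi_0=\psi_1+p_x\otimes id$ together with the orthogonality (4.6) of $p_x\otimes id$ and $\psi_1$, so that (4.7) applies directly with no swap: $q(\psi_0,\psi_1)=q(p_x\otimes id,0)$. Third, because $p_x$ is an idempotent but not self-adjoint, one still needs the final step (present in the paper) of smoothly homotoping $p_x$ to an honest rank-one projection; your claim that $p$ is already the orthogonal projection onto $\C e_\emptyset$ skips this and, as explained, puts the rank-one operator in the wrong summand.
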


\begin{proof}
This follows from the results of section 2. Let $s=\log(\lambda)>0$ and consider Lafforgue's family of Fredholm representations of exponent $\lambda$. In fact, 3.5 and 4.4 imply that the homomorphism (4.17) is piecewise $C^1$-homotopic to $\Psi_{{\cal E}_T}$ for $T>>0$. This Fredholm representation is defined by the same representations and operators on the pre-Hilbert space $\C(\widetilde{\Delta})$ as Lafforgue's Fredholm representation ${\cal E}_{s,T}$. These two representations take values in different Hilbert space completions of $\C(\widetilde{\Delta})$ though. In order to compare the associated characteristic homomorphisms it is necessary to identify the various Hilbert spaces coming up with our prefered model Hilbert space. This implies that the various characteristic homomorphisms take values in the same algebra. The new characteristic homomorphisms will however be well defined only up to inner automorphisms. As conjugate homomorphisms into ${\cal K}$ are smoothly homotopic to each other this does not affect our considerations. Thus (4.18) is homotopic to 
$\Psi_{{\cal E}_{s,T}}$ and further to $\Psi_{{\cal E}_{s,0}}$ and $\Psi_{{\cal E}_{s,0,1}}$ by 3.3 and 3.6. We calculate now the latter homomorphism. One has
$$
{\cal E}_{s,0,1}\,=\,({\cal H}_{x,s},\,\varphi,\,\rho',\,F),\,\,\,\rho'=
\left((J_x\partial)\rho(J_x\partial)+(\partial J_x)\rho(\partial J_x)\right),\,\,\,F=(\partial+J_x\partial J_x).
\eqno(4.20)
$$
Let $p_x:\C(\widetilde{\Delta})\to\C(\widetilde{\Delta})$ be the idempotent operator of rank one which vanishes on $\C(\Delta_{k})$ for $k\neq 1$ and is given on $\C(\Delta_{1})=\C(\Gamma)$ by the projection with image the subspace spanned by the base simplex $e_x=u_e$ and kernel the augmentation ideal of $\C(\Gamma)$. Put $p_x\otimes id:\C\Gamma\to{\cal K(H)}\otimes\C\Gamma,\,u_g\mapsto p_x\otimes u_g.$
A straightforward calculation (making use of the fact that $\varphi(1)$ and $F$ strictly intertwine the representation $\rho'$) shows that the homomorphisms 
$\psi_{0,1}:\C\Gamma\to{\cal L(H)}\otimes\C\Gamma$ attached by (4.8) and (4.9) to the Fredholm representation ${\cal E}_{s,0,1}$ satisfy 
$$
\psi_0=\psi_1+p_x\otimes id 
\eqno(4.21)
$$
and
$$
\psi_1(\C\Gamma)(p_x\otimes id)(\C\Gamma)=
(p_x\otimes id)(\C\Gamma)\psi_1(\C\Gamma)=0.
\eqno(4.22)
$$ 
So we may conclude from (4.7) that 
$$
q(\psi_0,\psi_1)=q(\psi_1+p_x\otimes id,\psi_1)=q(p_x\otimes id,0),
\eqno(4.23)
$$
i.e. the homomorphism $q(\psi_0,\psi_1)$  
equals the composition
$$
q(\C\Gamma)\overset{q(id,0)}{\longrightarrow}
\C\Gamma\overset{p_x\otimes id}{\longrightarrow}{\cal K(H)}\otimes\C\Gamma.
\eqno(4.24)
$$
Passing to completions we find that the characteristic homomorphism attached to ${\cal E}_{s,0,1}$ factors for $R>>0$ as
$$
\Psi_{{\cal E}_{s,0,1}}:\,q_R({\cal A}_\lambda\Gamma)\,\overset{q_R(id,0)}{\longrightarrow}\,{\cal A}_\lambda(\C,\Gamma)\,
\overset{{\cal A}(p_x\otimes id)}{\longrightarrow}\,{\cal A}({\cal K(H)},\Gamma).
\eqno(4.25)
$$
Finally the idempotent rank one operator $p_x$ is smoothly homotopic 
to a rank one projection, and the theorem follows.

\end{proof}

\section{Local cyclic cohomology}
We recall some basic facts about local cyclic cohomology.

Cuntz and Quillen \cite{CQ2} define the $X$-complex of a complex Banach algebra $R$ as the $\Z/2\Z$-graded chain complex of Banach spaces 
$$
\begin{array}{cccc}
X_*(R): & X_0(R)\,=\,R & 
\leftrightarrow
& X_1(R)\,=\,\Omega^1R/\overline{[\Omega^1R,R]}
\end{array}
\eqno(5.1)
$$
with differentials
$$
\begin{array}{cccccc}
\overline{d}:& R & \overset{d}{\to} & \Omega^1R & \to & \Omega^1R/\overline{[\Omega^1R,R]}
\end{array}
\eqno(5.2)
$$
and
$$
\begin{array}{ccccccc}
b: & \Omega^1R/\overline{[\Omega^1R,R]} & \to & R, & \overline{a_0da_1} &
\mapsto & [a_0,a_1].
\end{array}
\eqno(5.3)
$$

Let $A$ be a Banach algebra with open unit ball $U$. For any compact subset $K\subset U$ denote by $A_K$ the completion of the complex subalgebra $\C[K]$ of $A$ generated by $K$ with respect to the largest submultiplicative seminorm $\parallel-\parallel_K$ satisfying $\parallel K\parallel_K\leq 1$. For $K\subset K'\subset U$ the identity on $\C[K]$ gives rise to a diagram $A_K\to A_{K'}\to A$ so that we obtain an ind-algebra
$$
\begin{array}{ccc}
{\cal A} & = & "\underset{\underset{\cal S}{\longrightarrow}}{\lim}"\,A_K
\end{array}
\eqno(5.4)
$$
parametrised by the family ${\cal S}\,=\,{\cal S}(A)\,=\,\{K\subset U\subset A,\,K\,\text{compact}\}$ of compact subsets of the open unit ball of $A$, ordered by inclusion (\cite{Pu4},1.3). It comes equipped with a canonical morphism
$$
{\cal A} \,\to \,A.
\eqno(5.5)
$$
Let $B$ be a Banach algebra and let $R>1$. We denote by $T_RB$ the Banach algebra obtained by completion of the tensor algebra 
$TB\,=\,\underset{n\geq 1}{\bigoplus}\,B^{\otimes n}$ with respect to the largest submultiplicative seminorm such that the canonical linear inclusion
$$
\varrho:\,B\to T_RB
\eqno(5.6)
$$ 
is of norm at most 2 and such that the linear map
$$
\begin{array}{ccccccc}
\omega:& B\otimes_\pi B & \to & T_RB, & (b_0\otimes b_1) & \mapsto & 
\varrho(b_0b_1)\,-\,\varrho(b_0)\varrho(b_1)
\end{array}
\eqno(5.7)
$$
is of norm at most $R^{-1}$. For $1<R<R'$ the identity on $TB$ gives rise to a bounded homomorphism $T_RB\to T_{R'}B$ so that one can form the ind-algebra
(\cite{Pu4},1.6)
$$
\begin{array}{ccc}
{\cal T}B & = & "\underset{R\to\infty}{\lim}"\,T_RB
\end{array}
\eqno(5.8)
$$
For a Banach algebra $A$ one calls 
$$
\begin{array}{ccc}
{\cal TA} & = & "\underset{R\to\infty}{\lim}"
"\underset{\underset{\cal S}{\longrightarrow}}{\lim}"\,T_R(A_K)
\end{array}
\eqno(5.9)
$$
the universal infinitesimal deformation of $A$ (\cite{Pu4},1.23).
\\
The formal inductive limit 
$$
\begin{array}{ccc}
X_*({\cal TA}) & = & "\underset{R\to\infty}{\lim}"
"\underset{\underset{\cal S}{\longrightarrow}}{\lim}"\,X_*(T_R(A_K))
\end{array}
\eqno(5.10)
$$
of chain complexes is called the {\bf analytic cyclic bicomplex} of $A$.\\
\\
Consider the ind-category $ind\,{\cal C}$ of formal inductive limits of $\Z/2\Z$-graded chain complexes of Banach spaces over $\C$. Its chain homotopy category $Ho(ind\,{\cal C})$ is triangulated in a natural way. 
We call an ind-complex weakly contractible if every chain map from a constant ind-complex to it is nullhomotopic. The weakly contractible ind-complexes form a nullsystem and the triangulated category obtained from $Ho(ind\,{\cal C})$ by inverting all chain maps with weakly contractible mapping cone is called the {\bf derived ind-category} $ind\,{\cal D}$ (\cite{Pu4},5.5). \\
\\
The {\bf bivariant local cyclic cohomology} of a pair $(A,B)$ of complex Banach algebras is defined (see \cite{CQ2},\cite{Pu4}) as 
$$
HC_{loc}^*(A,B)\,=\,Mor_{ind\,{\cal D}}(X_*({\cal TA}),X_*({\cal TB})).
\eqno(5.11)
$$
Composition of morphisms gives rise to an associative bilinear product 
$$
HC_{loc}^*(A,B)\,\otimes\,HC_{loc}^*(B,C)\,\longrightarrow\,HC_{loc}^*(A,C).
\eqno(5.12)
$$
One says that a homomorphism of ind-Banach algebras is a {\bf local 
$HC$-equivalence} if the corresponding morphism of analytic cyclic (ind-)complexes is an isomorphism in the derived ind-category. 

In order to verify this the following criterion is useful.

\begin{prop}(\cite{Pu4},2.9, 5.5)
Let 
$$
\Psi:\,"\underset{\underset{I}{\longrightarrow}}{\lim}"C^{(i)}_*\,
\longrightarrow\,"\underset{\underset{J}{\longrightarrow}}{\lim}"E^{(j)}_*
$$ 
be a chain map of degree $n\in\{0,1\}$ of ($\Z/2\Z$-graded) ind-complexes.
Then $\psi$ defines an isomorphism in the derived ind-category $ind\,{\cal D}$ if and only if the following condition is satisfied:

For given $i\in I,\,j\in J,$ and $\psi^{ij}:C^{(i)}_*\to E^{(j)}_*$ representing $\Psi$ there exist\\ $i'\geq i\in I,\,j'\geq j\in J,$ and a chain map 
$\chi^{ii'jj'}:E^{(j)}[n]_*\,\to\,C^{(i')}_*$ of degree $-n$ such that the diagram 
$$
\begin{array}{ccc}
C^{(i')}_* & \overset{\psi^{i'j'}}{\longrightarrow} & E^{(j')}[n]_* \\
 & & \\
\uparrow & \chi^{ii'jj'}\nwarrow & \uparrow \\
& & \\

C^{(i)}_* & \overset{\psi^{ij}}{\longrightarrow} & E^{(j)}[n]_* \\
\end{array}
\eqno(5.13)
$$
commutes up to chain homotopy. (The vertical maps are the structure maps of the ind-complexes. No compatibility of the various diagrams among each other is required.) 
\end{prop}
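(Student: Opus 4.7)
The plan is to pass to mapping cones and reduce the claim to the standard dictionary relating invertibility in a localized triangulated category with weak contractibility of the cone. By the definition of $ind\,{\cal D}$ in (\cite{Pu4},5.5), $\Psi$ is invertible in the derived ind-category if and only if its mapping cone $\Cone(\Psi)$ is weakly contractible. This cone can be formed level-wise: its entries are the ordinary mapping cones $\Cone(\psi^{ij})$ indexed over the filtered diagram of compatible pairs $(i,j)$ with $\psi^{ij}$ representing $\Psi$, and the structure maps are those induced from the source and target ind-complexes. The assertion of the proposition is therefore a translation between weak contractibility of $\Cone(\Psi)$ and the lifting data (5.13).

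For the forward direction I would start from weak contractibility. For each pair $(i,j)$ the structure map of the source, composed with the canonical inclusion $C^{(i')}_*[1] \hookrightarrow \Cone(\psi^{i'j'})[1]$, defines a chain map from the constant ind-complex $C^{(i)}_*[n]$ into $\Cone(\Psi)[n+1]$. By weak contractibility this composite is nullhomotopic after enlarging to some $i'\geq i$ and $j'\geq j$. Inspecting the components of the nullhomotopy with respect to the direct sum decomposition of the cone yields precisely a chain map $\chi^{ii'jj'}\colon E^{(j)}_*[n]\to C^{(i')}_*$ of degree $-n$, together with chain homotopies witnessing the commutativity of the two triangles in diagram (5.13).

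For the converse I would show that any chain map $\alpha\colon D_*\to \Cone(\psi^{ij})$ out of a constant complex becomes nullhomotopic in the ind-system. Writing $\alpha$ as a pair $(\alpha_E,\alpha_C)$ with $\alpha_E\colon D_*\to E^{(j)}_*[n]$ and $\alpha_C\colon D_*[-1]\to C^{(i)}_*$, the chain condition reads $\partial\alpha_C\pm\psi^{ij}\alpha_E=0$. Composing $\alpha_E$ with $\chi^{ii'jj'}$ and combining with the structure map applied to $\alpha_C$ produces a candidate contracting homotopy on $D_*\to \Cone(\psi^{i'j'})$; the two chain homotopies provided by the homotopy commutativity of (5.13) correct the resulting error terms and yield an honest nullhomotopy. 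Since every chain map from a constant ind-complex factors through such a $\Cone(\psi^{ij})$, this establishes weak contractibility of $\Cone(\Psi)$ and hence invertibility of $\Psi$ in $ind\,{\cal D}$.

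The main subtlety is the ind-theoretic coherence together with the sign and parity bookkeeping for $n\in\{0,1\}$: the lifting condition requires commutativity up to chain homotopy only at each individual pair of indices, with no compatibility demanded between different choices. One therefore has to arrange the nullhomotopies assembling $\Cone(\Psi)$ so that they respect the structure maps without relying on a globally coherent quasi-inverse $\chi$. Once these details are tracked carefully the argument reduces to the familiar "mapping cone is contractible iff the map is an equivalence" equivalence, transported verbatim to the ind-category setting.
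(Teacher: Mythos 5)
Your overall strategy coincides with the paper's: reduce everything to weak contractibility of the mapping cone, and observe that weak contractibility of $\mathrm{Cone}(\Psi)$, unfolded level by level, is literally criterion (5.13). Your converse direction (criterion $\Rightarrow$ isomorphism) is sound, since a map whose cone is weakly contractible is inverted in $ind\,{\cal D}$ by the very construction of the localization; the paper simply cites this direction from \cite{Pu4}.

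The genuine gap is in your opening sentence: it is \emph{not} true "by the definition of $ind\,{\cal D}$" that $\Psi$ invertible implies $\mathrm{Cone}(\Psi)$ weakly contractible. The derived ind-category is obtained by inverting all maps whose cone lies in the null system of weakly contractible objects; in such a localization a morphism can become invertible while its cone is only isomorphic to zero in the quotient, which a priori is weaker than lying in the null system. Establishing that implication is precisely the content of the paper's proof: if $f$ is invertible then $\mathrm{Cone}(f)\cong 0$ in $ind\,{\cal D}$, so there is a map $\varphi:Z\to \mathrm{Cone}(f)$ with weakly contractible cone satisfying $\varphi=0$; hence $Z[1]\oplus \mathrm{Cone}(f)\cong \mathrm{Cone}(\varphi)$ is weakly contractible, and one concludes because weak contractibility passes to direct summands. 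Without this (or an equivalent saturation argument) your "only if" direction does not start. A secondary, fixable slip: in extracting $\chi^{ii'jj'}$ you nullhomotope a map out of the constant complex $C^{(i)}_*$, whose nullhomotopy has components mapping \emph{out of} $C^{(i)}_*$ and therefore cannot produce a map $E^{(j)}_*[n]\to C^{(i')}_*$; the correct move is to nullhomotope the structure map $\mathrm{Cone}(\psi^{ij})\to \mathrm{Cone}(\psi^{i'j'})$, viewed as a chain map from the constant ind-complex $\mathrm{Cone}(\psi^{ij})$, whose matrix components then yield $\chi$ and the two homotopies in (5.13).
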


\begin{proof}
For the facts about triangulated categories, which are used in this proof, we refer to \cite{KaSh}, 1.4.-1.6. It is shown in \cite{Pu4}, 5.5 and 2.9, that a chain map of ind-complexes, which satisfies criterion (5.13) defines an isomorphism in the derived ind-category. Let now $f:X\to Y$ be a chain map of ind-complexes, which defines an isomorphism in the derived ind-category $ind\,{\cal D}$. This implies that its cone $Cone(f)$ equals 0 in $ind\,{\cal D}$. Therefore, there exists a chain map of ind-complexes $\varphi:Z\to Cone(f)$ with weakly contractible mapping cone $Cone(\varphi)$, such that $\varphi=id_{Cone(f)}\circ\varphi=0_{Cone(f)}\circ\varphi=0$. The cone of 
$Z\overset{0}{\to}Cone(f)$ is isomorphic to $Z[1]\oplus Cone(f)$. Being isomorphic to the weakly contractible complex $Cone(\varphi)$, $Z[1]\oplus Cone(f)$ is weakly contractible itself. The same holds for its direct factor $Cone(f)$. Now by definition, the mapping cone $Cone(f)$ of the chain map $f$ of ind-complexes is weakly contractible iff $f$ satisfies criterion (5.13). 
\end{proof}
\\
\\
This criterion plays a crucial role in the proof of the following theorem, which may be viewed as the main feature which distinguishes local cyclic cohomoloy from the other cyclic theories.

\begin{theorem} (Limit Theorem, \cite{Pu4}, 3.15, 6.16)
Let $"\underset{\underset{\lambda\in\Lambda}{\longrightarrow}}{\lim}"B_\lambda$ be a countable directed family of Banach algebras and let 
$$
f\,=\,\underset{\longleftarrow}{\lim}f_\lambda:"\underset{\underset{\lambda\in\Lambda}{\longrightarrow}}{\lim}"B_\lambda\,\longrightarrow B
$$
 be a homomorphism to another Banach algebra. 
Suppose that 
\begin{itemize}
\item $B$ is separable and possesses the Grothendieck approximation property.
\item The image $Im(f)\,=\,\underset{\underset{\lambda\in\Lambda}{\longrightarrow}}{\lim}f_\lambda(B_\lambda)$ is dense in $B$.
\item There exists a constant $C$ such that 
$$
\underset{\underset{\lambda\in\Lambda}{\longrightarrow}}{\overline{\lim}}\parallel b_\lambda\parallel_\lambda\leq C\parallel f(b)\parallel_B
$$
for all 
$$
b=\underset{\underset{\lambda\in\Lambda}{\longrightarrow}}{\lim}\,b_\lambda\,\in\,\underset{\underset{\lambda\in\Lambda}{\longrightarrow}}{\lim}\,B_\lambda.
$$
\end{itemize}
Then $f$ is a local $HC$-equivalence
\end{theorem}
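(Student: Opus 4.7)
The plan is to verify the criterion of Proposition 5.1 for the chain map of ind-complexes
$$X_*({\cal T}f): X_*({\cal T}B_\infty) \longrightarrow X_*({\cal T}B),$$
where $B_\infty$ denotes the ind-Banach algebra $\underset{\underset{\lambda}{\longrightarrow}}{\lim}\,B_\lambda$. Given a representative level $(\lambda_0,K_0,R_0)$ on the source and $(K_1,R_1)$ on the target, the criterion demands larger indices together with a backwards chain map from $X_*(T_{R_1}(B_{K_1}))$ into some $X_*(T_{R_0'}(B_{\lambda_0',K_0'}))$ filling (5.13) up to chain homotopy in both triangles.

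The construction of such approximate inverses combines the three hypotheses in a single geometric picture. From the Grothendieck approximation property of $B$ one extracts, for each compact $K\subset B$ and each $\varepsilon>0$, a finite rank operator $T=\sum_{j=1}^N \ell_j(-)\,b_j$ with $\ell_j\in B^*$ and $b_j\in B$, satisfying $\|Tk-k\|_B<\varepsilon$ uniformly for $k\in K$. Density of $\mathrm{Im}(f)$ in $B$ then lets one replace each $b_j$ by $f(b_j')$ with $b_j'\in B_\lambda$ for a common index $\lambda$; the countable cofinality needed is provided by separability of $B$. The uniform norm estimate forces $\|b_j'\|_{B_\lambda}$ to remain controlled by $C\|b_j\|_B+\varepsilon$, so after a harmless rescaling the $b_j'$ sit in a compact subset $K'\subset B_\lambda$ of the open unit ball.

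From this I would build a bounded linear section $\sigma: B_K \to B_{\lambda,K'}$, $b\mapsto \sum_j \ell_j(b)\,b_j'$, whose multiplicative defect has norm bounded by $O(\varepsilon)$ on $K$. The analytic tensor algebra is precisely designed to absorb such defects: for $R'$ large enough that $\varepsilon R'\leq 1$, the Cuntz--Quillen curvature $\omega(\sigma(b),\sigma(b'))=\varrho(\sigma(bb'))-\varrho(\sigma(b))\varrho(\sigma(b'))$ has norm at most $R'^{-1}$, so $b\mapsto \varrho(\sigma(b))$ extends by the universal property of $T_{R'}$ to a bounded algebra homomorphism $T_R(B_K) \to T_{R'}(B_{\lambda,K'})$. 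The induced map on $X$-complexes is the candidate $\chi$.

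The main obstacle lies in exhibiting the chain homotopies required for the two triangles of (5.13). Composing $\chi$ with $X_*({\cal T}f)$ produces, at some higher target level $(K_1',R_1')$, a chain map that differs from the structure map of $X_*({\cal T}B)$ by a perturbation of curvature type with norm $O(\varepsilon)$; composing in the other order yields an analogous perturbation of the source structure map. In each case the straight-line interpolation between the two homomorphisms, read as a single bounded homomorphism into $T_{R''}$ for a slightly larger $R''$, has uniformly nilpotent curvature in the completed tensor algebra, so the standard Cuntz--Quillen homotopy formula on the $X$-complex produces a bona fide contracting chain homotopy with finite bounds. Choosing $\varepsilon=\varepsilon(R,K)$ small enough as the source indices grow glues these local data into a chain map and homotopies of ind-complexes, completing the verification of Proposition 5.1.
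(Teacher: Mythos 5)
You should first note that the paper itself gives no proof of Theorem 5.2: it is quoted verbatim from \cite{Pu4} (3.15, 6.16), so the only thing to measure your sketch against is the machinery of that reference, where the points you pass over quickly are precisely the technical heart of the argument. Your overall strategy (verify the criterion of Proposition 5.1 by building finite--rank approximate inverses from the approximation property, absorb their multiplicative defect in the analytic tensor algebras, and produce the two homotopies of (5.13) by the Cuntz--Quillen/Cartan formula) is the right one in spirit, but as written it has a genuine gap: every $\varepsilon$--estimate you establish lives in the norm of $B$, whereas the criterion requires the curvature of $\sigma$ and the homotopies to be small in the seminorms of the \emph{source} ind-system, i.e.\ in $(B_{\lambda'})_{K'}$ and $T_{R'}\bigl((B_{\lambda'})_{K'}\bigr)$, where the seminorm is the largest submultiplicative one with $\parallel K'\parallel\leq 1$ and is in general far larger than the norm of $B_{\lambda'}$, let alone the norm pulled back from $B$. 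To transfer the estimates you must (i) apply the third hypothesis, which is only a pointwise $\overline{\lim}$ over a directed system whose structure maps are not assumed contractive, \emph{uniformly} to the whole compact family of defect elements $\{\sigma(bb')-\sigma(b)\sigma(b'):\,b,b'\in K\}$ (and to the interpolating family used for the homotopies) --- a uniformization/diagonal argument exploiting the countability of $\Lambda$ which you never perform --- and (ii) choose the compact set $K'$ so that it contains a suitable rescaling of this defect set, since only then does the defect become small for the $K'$-seminorm. Without these two steps the assertion ``the curvature has norm at most $R'^{-1}$'' is unjustified, and the subsequent extension to $T_{R'}$ and the homotopy argument collapse.

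Two further points need repair. The Grothendieck approximation property gives no control on $\parallel T\parallel$ or on $\sum_j\parallel\ell_j\parallel$, so the boundedness with controlled norm of $\varrho\circ\sigma$ on all of $B_{K}$ --- whose unit ball contains arbitrarily long products of elements of $K$, on which $\sigma$ is only bounded by $\sum_j\parallel\ell_j\parallel$ --- and hence the extension of $\sigma$ to a bounded homomorphism $T_R(B_K)\to T_{R'}\bigl((B_{\lambda'})_{K'}\bigr)$ is not automatic from the universal property as you invoke it; one has to organize the compact sets (e.g.\ enlarging $K$ to contain $K\cdot K$ so that the defect bound avoids $\parallel T\parallel$, and putting rescaled images $\sigma(K)$ into $K'$) before the tensor-algebra formalism applies. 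Likewise, the claim that the straight-line interpolation between $f\circ\sigma$ and the identity (resp.\ between $\sigma\circ f$ and a structure map) is ``a single bounded homomorphism with uniformly nilpotent curvature'' is exactly the same estimate in disguise and needs the same missing norm control on the source side. These are not cosmetic omissions: they are the reason the limit theorem requires separability, the approximation property and the uniform bound $C$ simultaneously, and they constitute the substance of the proof in \cite{Pu4}.
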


\newpage

\section{The Chern-Connes character of the $\gamma$-element}

The aim of this chapter is to establish
\begin{theorem}
Let $\Gamma$ be a word-hyperbolic group, let $\gamma\in KK_\Gamma(\C,\C)$ be Kasparov's $\gamma$-element, and let 
$j_r(\gamma)\in KK(C_r^*\Gamma,C_r^*\Gamma)$ be its image under "descent" \cite{Ka2}. Then
$$
\begin{array}{ccccc}
ch_{biv}(j_r(\gamma)) & = & 1 & \in & HC_{loc}(C_r^*\Gamma,\,C_r^*\Gamma).
\end{array}
\eqno(6.1)
$$
\end{theorem}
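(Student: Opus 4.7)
The plan is to apply the explicit construction of $ch_{biv}$ recalled in Section 2 to a Fredholm module representing $j_r(\gamma)$, and to identify its image in $HC_{loc}$ with that of the identity by combining the homotopy of characteristic homomorphisms provided by Theorem 4.5 with the Limit Theorem 5.2. First I would observe that, under Kasparov descent, $j_r(\gamma)\in KK(C_r^*\Gamma,C_r^*\Gamma)$ is represented by a Fredholm module ${\cal E}$ whose characteristic homomorphism $\Phi_{{\cal E}}:q(C_r^*\Gamma)\to{\cal K}$ is the natural $C^*$-completion of the crossed-product characteristic homomorphism $\Psi_{\gamma_{KS}}:q({\cal A}\Gamma)\to{\cal A}(\Gamma,{\cal K})$ of Definition-Lemma 4.3. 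By formula (2.12), $ch_{biv}(j_r(\gamma))=i_{p*}^{-1}\circ\Phi_{{\cal E}*}\circ q(id,0)_*^{-1}$, so the theorem reduces to the identification $\Phi_{{\cal E}*}=(i_p\circ q(id,0))_*$ in $HC_{loc}$.

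Second, I would pass to the unconditional Banach algebra level and apply Theorem 4.5. For $\lambda>1$ and $R$ sufficiently large, the composition $q_R({\cal A}_\lambda\Gamma)\to q_R({\cal A}\Gamma)\overset{\Psi_{\gamma_{KS}}}{\longrightarrow}{\cal A}(\Gamma,{\cal K})$ is piecewise $C^1$-homotopic to $q_R({\cal A}_\lambda\Gamma)\overset{q(id,0)}{\longrightarrow}{\cal A}_\lambda\Gamma\overset{i_p}{\longrightarrow}{\cal A}(\Gamma,{\cal K})$. Applying the analytic cyclic bicomplex functor $X_*({\cal T}-)$, whose diffeotopy invariance converts piecewise $C^1$ operator homotopies into chain homotopies of the associated ind-complexes, one deduces an equality of induced morphisms in the derived ind-category $ind\,{\cal D}$, hence an equality of $HC_{loc}$-classes at the level of the unconditional completions.

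Third, I would transport this equality to $C_r^*\Gamma$ by means of the Limit Theorem 5.2. For a word-hyperbolic group $\Gamma$ one may choose the unconditional algebra ${\cal A}\Gamma$ (for example Jolissaint's rapid-decay algebra $H^\infty(\Gamma)$) so that, upon letting $\lambda\to 1$ along a countable directed family of unconditional exponentially weighted completions, the inclusions ${\cal A}_\lambda\Gamma\hookrightarrow C_r^*\Gamma$ and ${\cal A}(\Gamma,{\cal K})\hookrightarrow C_r^*\Gamma\otimes{\cal K}$ satisfy the hypotheses of Theorem 5.2; the crucial norm-comparison bound is furnished by property (RD) of hyperbolic groups. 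The Limit Theorem then delivers local $HC$-equivalences, and the equality established at the unconditional level descends to $HC_{loc}(q(C_r^*\Gamma),C_r^*\Gamma\otimes{\cal K})$. Combined with formula (2.12), the invertibility of $i_{p*}$, and the identity $q(id,0)_*\circ q(id,0)_*^{-1}=id$, this produces $ch_{biv}(j_r(\gamma))=ch_{biv}(1)=1$.

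I expect the third step to be the principal obstacle. One must carefully choose the unconditional algebra, verify the norm-comparison hypothesis of the Limit Theorem for the ${\cal K}$-stabilized crossed product ${\cal A}(\Gamma,{\cal K})\to C_r^*\Gamma\otimes{\cal K}$, and perform a diagram chase identifying the various Hilbert space completions appearing in Lafforgue's construction with the preferred model Hilbert space $\ell^2(\widetilde{\Delta})$ underlying $\Psi_{\gamma_{KS}}$. These identifications alter the characteristic homomorphisms only by inner automorphisms and hence, as noted in the proof of Theorem 4.5, do not affect the resulting class in $HC_{loc}$.
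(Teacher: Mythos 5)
Your overall strategy --- the explicit formula (2.12), Theorem 4.5, passage to unconditional completions, and the Limit Theorem --- matches the paper's setup, but your second step contains a genuine gap, and it is precisely the one the paper warns against. Theorem 4.5 provides, for each fixed $\lambda>1$ and suitable $R_\lambda$, a piecewise $C^1$-homotopy between a representative of $\Psi_{\gamma_{KS}}$ and one of $i_p\circ q(id,0)$, but these homotopies are not compatible with the structure maps of the ind-systems (they depend on Lafforgue's parameters $s=\log\lambda$ and $T\gg 0$, and on identifications of the various Hilbert space completions which are canonical only up to inner automorphisms). From such levelwise homotopies you cannot conclude that the two induced morphisms of ind-complexes are \emph{equal} in the derived ind-category: $\mathrm{Mor}_{ind\,{\cal D}}$ involves an inverse limit over the index sets, and higher inverse limits obstruct assembling uncorrelated levelwise homotopies into a homotopy of ind-morphisms. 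The paper states this explicitly at the end of the proof of Proposition 6.6 (``it is tempting to conclude directly that the two morphisms above are equal as morphisms in the derived ind-category \dots one has to be very cautious''). What the levelwise homotopies do yield --- via the Cartan homotopy formula and the isomorphism criterion 5.1, which is stable under replacing a map by a levelwise homotopic one --- is only that $\Psi_{\gamma_{KS}}$ satisfies the criterion because $i_p\circ q(id,0)$ does, i.e.\ that $\Psi_{\gamma_{KS}}$, and hence $\Phi_{j_r(\gamma_{KS})}$, is a local $HC$-equivalence; it does not identify its class. So your phrase ``hence an equality of $HC_{loc}$-classes at the level of the unconditional completions'' is unjustified, and your third step then has nothing to transport.

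The missing idea that closes the paper's proof is the idempotency argument: since $\gamma$ is idempotent under the Kasparov product and both descent and $ch_{biv}$ are multiplicative, $ch_{biv}(j_r(\gamma))$ is an idempotent element of $HC_{loc}(C_r^*\Gamma,C_r^*\Gamma)$; by the equivalence statement just established it is at the same time an isomorphism in $ind\,{\cal D}$ (being the composition $i_{p*}^{-1}\circ\Phi_{j_r(\gamma_{KS})*}\circ q(id,0)_*^{-1}$ of isomorphisms), and the only idempotent isomorphism is the identity, whence $ch_{biv}(j_r(\gamma))=1$. Either incorporate this step, or replace it by a genuinely more detailed analysis of the inverse-limit problem; as written, your proof asserts exactly the shortcut the paper deliberately avoids.
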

The theorem will follow from a series of intermediate results.
\begin{prop}(\cite{Jol}, \cite{La3})

Let $(\Gamma,S)$ be a word hyperbolic group. There exists an unconditional Banach algebra $A\Gamma$ over $\gamma$, which is a smooth subalgebra of 
$C^*_r\Gamma$ in the sense of \cite{Pu4},3.8. In particular, $A\Gamma$ is dense and closed under holomorphic functional calculus in $C^*_r\Gamma$ and the inclusion $i:\,A\Gamma\,\to\,C^*_r\Gamma$ is a local $HC$-equivalence.
\end{prop}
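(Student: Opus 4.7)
The plan is to construct $A\Gamma$ as a Jolissaint-type weighted $\ell^2$ completion of the group ring and then to verify in turn the three assertions of the proposition. First I would take, for $s>0$ sufficiently large (depending on the RD constant of $\Gamma$), the Banach algebra completion $H^s(\Gamma)$ of $\C\Gamma$ with respect to the weighted Sobolev norm
$$
\|\sum_g a_g u_g\|_s \,=\,\left(\sum_g |a_g|^2 (1+\ell_S(g))^{2s}\right)^{1/2}.
$$
Word-hyperbolic groups enjoy Jolissaint's property (RD) by the theorem of Haagerup--de la Harpe, so for $s$ large enough $H^s(\Gamma)$ is a sub-Banach algebra of $C^*_r\Gamma$, dense in it, and closed under holomorphic functional calculus \cite{Jol}. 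The defining norm depends only on the moduli $|a_g|$, hence the unconditional axiom (4.1) of Definition 4.1 is satisfied, and we may set $A\Gamma := H^s(\Gamma)$ for one such $s$.

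Next I would verify that $A\Gamma$ is smooth in $C^*_r\Gamma$ in the sense of \cite{Pu4}, 3.8. The required control of iterated products and commutators in the $\|\cdot\|_s$-norm is exactly the RD estimate, and together with the holomorphic functional calculus stability it is precisely what is needed to meet the axioms of 3.8. The local $HC$-equivalence then reduces, via 3.8, to the Limit Theorem 5.2 applied to the countable directed family $\{H^s(\Gamma)\}_{s\in\N}$. The hypotheses of 5.2 are each easy to check here: the inclusions are bounded and have dense image in $C^*_r\Gamma$; the $\overline{\lim}$ norm estimate is immediate from the definition of the Sobolev norms together with RD; and $C^*_r\Gamma$ is separable and has the Grothendieck approximation property, which follows from the completely bounded approximation property for hyperbolic groups established by Ozawa (refining the Haagerup result for free groups).

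The main obstacle is the verification of the smoothness axiom \cite{Pu4}, 3.8, for the Jolissaint algebra $H^s(\Gamma)$ of a word-hyperbolic group; this is the substantive technical content of the proposition and is extracted from the RD estimates of \cite{Jol} together with Lafforgue's refinements in \cite{La3}. Once that axiom is in place, the density, the holomorphic functional calculus stability, the unconditional property, and the passage to a local $HC$-equivalence via the Limit Theorem are all routine.
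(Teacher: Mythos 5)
Your construction of $A\Gamma$ is the same as the paper's: the Jolissaint weighted $\ell^2$-completion ${\mathfrak A}_k\Gamma$ of $\C\Gamma$, unconditional by the very form of the norm, and a dense, spectrally invariant subalgebra of $C_r^*\Gamma$ for $k$ large by property (RD) \cite{Jol}, \cite{La3}; you also correctly identify Ozawa \cite{Oz} as the source of the Grothendieck approximation property of $C_r^*\Gamma$. The genuine gap is in how you pass from these facts to the local $HC$-equivalence. The Limit Theorem 5.2 cannot be applied to the family $\{H^s(\Gamma)\}_{s\in\N}$ mapping to $C_r^*\Gamma$: the inclusions $H^{s'}(\Gamma)\subset H^s(\Gamma)$ go in the direction of \emph{decreasing} $s$, so there is no directed system over increasing $s$ at all, and however one reindexes, the third hypothesis of 5.2 fails — one would need $\overline{\lim}\,\|b\|_{s}\leq C\|b\|_{C_r^*\Gamma}$ uniformly, whereas $\|u_g\|_s=(1+\ell_S(g))^s$ while $\|u_g\|_{C_r^*\Gamma}=1$, so the Sobolev norms are in no sense dominated by the $C^*$-norm. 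The statement that the inclusion of a dense \emph{smooth} subalgebra is a local $HC$-equivalence is not the Limit Theorem but the smooth subalgebra theorem \cite{Pu4}, 5.15, and this is exactly where the approximation property of $C_r^*\Gamma$ is needed; in the paper the Limit Theorem only enters afterwards, for the ind-algebra $"\underset{\lambda\searrow 1}{\lim}"A_\lambda\Gamma\to C_r^*\Gamma$ of (6.3), where the relevant norms do decrease towards the limit norm.

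Second, your verification of the smoothness axiom \cite{Pu4}, 3.8 for $H^s(\Gamma)$ is asserted rather than argued: spectral invariance (closure under holomorphic functional calculus) is strictly weaker than smoothness in the sense of 3.8, and the bare RD inequality does not by itself deliver the required estimates. The paper fills this in two possible ways: either one passes through the Fr\'echet algebra ${\mathfrak A}_\infty\Gamma=\bigcap_k{\mathfrak A}_k\Gamma$, which is a smooth subalgebra both of $C_r^*\Gamma$ and of ${\mathfrak A}_k\Gamma$ by \cite{Pu3}, 4.2, applies \cite{Pu4}, 5.15 to both inclusions and concludes for $A\Gamma\hookrightarrow C_r^*\Gamma$ by comparing the two local $HC$-equivalences; or one adapts the argument of Proposition 1.2 of \cite{La3} to show directly that the single Banach algebra $A\Gamma$ is smooth in $C_r^*\Gamma$ and then invokes \cite{Oz} and \cite{Pu4}, 5.15. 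Your sketch is closest to the second route, but the smoothness verification you defer is precisely the substantive step, and the deduction mechanism you propose in its place (Limit Theorem 5.2) would not work.
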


\begin{proof}
According to Jolissaint \cite{Jol} (see also \cite{La3}), 
the completion ${\mathfrak A}_k\Gamma$ of the complex group ring $\C\Gamma$ with respect to the norm
$$
\parallel \sum a_gu_g\parallel^2_k\,=\,\sum (1+\ell_S(g))^{2k}\vert a_g\vert^2
\eqno(6.2)
$$
is an unconditional Banach algebra over $\Gamma$, provided that $k\in\N$ is large enough. Fix such an integer $k_0$ and put $A\Gamma={\mathfrak A}_{k_0}\Gamma$. The intersection ${\mathfrak A}_\infty\Gamma=\underset{k}{\bigcap}\,{\mathfrak A}_k\Gamma$ is a smooth Fr\'echet subalgebra \cite{Pu4}, 3.8. of $C_r^*\Gamma$ by \cite{Pu3}, 4.2. The same reasoning shows that ${\mathfrak A}_\infty\Gamma$ is a smooth subalgebra of ${\mathfrak A}_k\Gamma$ provided that $k$ is large enough. While it is obvious that an unconditional Banach algebra has the Grothendieck approximation property, the same assertion for the reduced group $C^*$-algebra of a word-hyperbolic group is a deep fact which follows for example from \cite{Oz}.  
One may therefore apply the smooth subalgebra theorem \cite{Pu4}, 5.15, and conclude that the inclusions 
${\mathfrak A}_\infty\Gamma\hookrightarrow C_r^*\Gamma$ and 
${\mathfrak A}_\infty\Gamma\hookrightarrow A\Gamma$ are local $HC$ equivalences. The same holds then also for the inclusion $A\Gamma\hookrightarrow C_r^*\Gamma$. Alternatively, one may adapt the argument of the proof of Proposition 1.2 in \cite{La3} to deduce directly
that $A\Gamma$ is a smooth subalgebra of $C_r^*\Gamma$, and applies then \cite{Oz} and \cite{Pu4}, 5.15.
\end{proof}

The limit theorem allows to conclude from the previous proposition that 
the morphism of ind-algebras
$$
"\underset{\lambda\searrow 1}{\lim}" A_\lambda\Gamma\,\longrightarrow\,C_r^*\Gamma
\eqno(6.3) 
$$
is a local $HC$-equivalence.\\
\\
\begin{lemma}
Let $\lambda\mapsto R_\lambda$ be a monotone increasing map from $]1,\infty[$ to itself. Then the canonical morphism of ind-algebras
$$
"\underset{\lambda\searrow 1}{\lim}" q_{R_\lambda}(A_\lambda\Gamma)\,\longrightarrow\,q(C_r^*\Gamma)
\eqno(6.4) 
$$
is a local $HC$-equivalence.
\end{lemma}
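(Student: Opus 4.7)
The plan is to deduce the lemma from the local $HC$-equivalence (6.3) by comparing (6.4) to it via the natural Cuntz projections. Specifically, I would consider the commutative square of ind-Banach algebras
$$
\begin{array}{ccc}
"\underset{\lambda\searrow 1}{\lim}" q_{R_\lambda}(A_\lambda\Gamma) & \longrightarrow & q(C^*_r\Gamma) \\
\downarrow & & \downarrow \\
"\underset{\lambda\searrow 1}{\lim}" A_\lambda\Gamma & \longrightarrow & C^*_r\Gamma \\
\end{array}
$$
whose vertical arrows are given level-wise by the natural Cuntz projections $q_R(id,0)$, whose bottom arrow is (6.3), and whose top arrow is (6.4). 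By the two-out-of-three property of isomorphisms in the triangulated derived ind-category $ind\,{\cal D}$, it would suffice to show that both vertical arrows are local $HC$-equivalences.

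For the right vertical arrow, this is essentially the content of (2.9): Cuntz's split-exact argument from \cite{Cu1} identifies $q(id,0)_*\in HC_{loc}(q(C^*_r\Gamma),C^*_r\Gamma)$ as invertible, since local cyclic cohomology is additive, split exact, matrix stable and diffeotopy invariant. For the left vertical arrow, I would first establish the Banach-algebraic analogue: each $q_{R_\lambda}(id,0): q_{R_\lambda}(A_\lambda\Gamma) \to A_\lambda\Gamma$ is a local $HC$-equivalence. This follows from the same formal argument, since the extension $0\to q_{R_\lambda}(A_\lambda\Gamma) \to Q_{R_\lambda}(A_\lambda\Gamma) \to A_\lambda\Gamma \to 0$ admits $i_0$ as a bounded Banach-algebra splitting, and Cuntz's proof only invokes split exactness, matrix stability, diffeotopy invariance and additivity. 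Passing to inductive limits preserves local $HC$-equivalences, because a level-wise isomorphism in $ind\,{\cal D}$ is an isomorphism.

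The main obstacle is verifying that the Banach version of Cuntz's theorem applies to $HC_{loc}$, i.e.\ that $q_R(id,0)_*$ is invertible in local cyclic cohomology for each Banach algebra and each $R\geq 1$. This is however routine given the axiomatic properties of $HC_{loc}$ worked out in \cite{Pu4}. An alternative route would be to invoke the Limit Theorem 5.3 directly for (6.4), but that would require checking the Grothendieck approximation property for $q(C^*_r\Gamma)$, which is less immediately accessible than the approximation property of $C^*_r\Gamma$ itself (available from \cite{Oz}); the comparison strategy above is therefore the cleaner option.
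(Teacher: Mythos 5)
Your proposal is correct and takes essentially the same route as the paper: the paper's proof is exactly this commutative square (its diagram (6.5)), with the observation that the vertical Cuntz maps $q(id,0)$ resp. $q_{R_\lambda}(id,0)$ are local (in fact analytic) $HC$-equivalences by Cuntz's argument combined with the excision theorem in analytic and local cyclic cohomology \cite{Pu2}, after which the claim follows from (6.3). The only difference is cosmetic: where you appeal to the axiomatic properties of $HC_{loc}$ for the Banach/ind-level version of Cuntz's theorem, the paper names excision as the precise technical input.
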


\begin{proof}
It follows from a well known argument of J.~Cuntz (see \cite{Cu1}) and the excision theorem in analytic and local cyclic cohomology 
\cite{Pu2} that the vertical arrows in the commutative diagram
$$
\begin{array}{ccccc}
& "\underset{\lambda\searrow 1}{\lim}" q_{R_\lambda}(A_\lambda\Gamma) & \longrightarrow & q(C_r^*\Gamma) & \\
& & & & \\
q_{R_\lambda}(id,0) & \downarrow & & \downarrow & q(id,0) \\
 & & & & \\
& "\underset{\lambda\searrow 1}{\lim}" A_\lambda\Gamma & \longrightarrow & C_r^*\Gamma & \\
\end{array}
\eqno(6.5)
$$
are local (and in fact analytic) $HC$-equivalences. The claim follows then from (6.3).
\end{proof}

\begin{lemma}
Let $p\in{\cal K(H)}$ be a projection of rank one.
The morphisms in the commutative diagram of ind-algebras
$$
\begin{array}{ccc}
 "\underset{\lambda\searrow 1}{\lim}" A_\lambda(\Gamma, {\cal K(H)}) & \longrightarrow &
C_r^*\Gamma\otimes_{C^*}{\cal K(H)}  \\
 & &    \\
i_p \uparrow & & \uparrow  i_p  \\
 & &    \\
 "\underset{\lambda\searrow 1}{\lim}" A_\lambda\Gamma & \longrightarrow &
C_r^*\Gamma   \\
\end{array}
\eqno(6.6)
$$
are local $HC$-equivalences.
\end{lemma}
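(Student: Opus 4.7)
The plan is to verify that three of the four arrows in the square are local $HC$-equivalences and then to deduce the fourth by the $2$-out-of-$3$ property for isomorphisms in the triangulated derived ind-category $ind\,{\cal D}$.

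The bottom horizontal arrow is already known to be a local $HC$-equivalence by (6.3). The right-hand vertical arrow $i_p:C_r^*\Gamma\to C_r^*\Gamma\otimes_{C^*}{\cal K(H)}$ is a local $HC$-equivalence by the strong stability of local cyclic cohomology with respect to the rank-one inclusion, recalled in (2.10)--(2.11); this is precisely the statement that $i_{p*}$ is invertible in $HC_{loc}$.

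For the top horizontal arrow I would invoke the Limit Theorem~5.2 with target $B=C_r^*\Gamma\otimes_{C^*}{\cal K(H)}$ and directed family $\{{\cal A}_\lambda(\Gamma,{\cal K(H)})\}_{\lambda\searrow 1}$. Separability of $B$ is immediate. The Grothendieck approximation property for $B$ follows from its validity for $C_r^*\Gamma$ (Ozawa, as used in the proof of Proposition~6.2) and for ${\cal K(H)}$, together with the fact that the minimal $C^*$-tensor product of two $C^*$-algebras with the approximation property again has the approximation property. Density of the image of the canonical map is clear, since the image already contains the algebraic tensor product of $\C\Gamma$ with the finite-rank operators on $H$. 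The required norm comparison is the matrix-coefficient analogue of the estimate underlying the proof of (6.3) and rests on the unconditionality of $A\Gamma$ together with the fact that $A\Gamma$ is a smooth subalgebra of $C_r^*\Gamma$; once one has it for the scalar family, inserting ${\cal K(H)}$-valued coefficients in Definition~4.2 changes the argument only cosmetically.

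Once these three arrows are established as local $HC$-equivalences, the left-hand vertical arrow follows as well: in the commuting square, the composition along any two sides is a local $HC$-equivalence, and since the bottom arrow is already one, the left arrow must be a local $HC$-equivalence by $2$-out-of-$3$ in $ind\,{\cal D}$.

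The main obstacle will be the verification of the norm hypothesis of the Limit Theorem for the top horizontal arrow. Rather than requiring any genuinely new input, it demands a careful tracking of the interaction between the unconditional norm on $A\Gamma$, the growth factor $\lambda^{\ell_S(g)}$ from Definition~4.2, and the passage from the scalar inclusion $A\Gamma\hookrightarrow C_r^*\Gamma$ to its ${\cal K(H)}$-coefficient enrichment, together with a confirmation that the approximation property genuinely does pass to the tensor product $C_r^*\Gamma\otimes_{C^*}{\cal K(H)}$.
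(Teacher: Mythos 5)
Your skeleton (verify three sides of the square, deduce the fourth by two-out-of-three in $ind\,{\cal D}$) matches the paper's, and your treatment of the bottom arrow and the right vertical arrow is fine. The problem is the arrow you chose to verify directly. Applying the Limit Theorem 5.2 to $"\underset{\lambda\searrow 1}{\lim}"A_\lambda(\Gamma,{\cal K(H)})\to C_r^*\Gamma\otimes_{C^*}{\cal K(H)}$ requires the norm estimate $\overline{\lim}_\lambda\Vert b\Vert_\lambda\leq C\Vert f(b)\Vert$, i.e. that the unconditional norm is dominated by the $C^*$-norm on the algebraic inductive limit. This fails: already in the scalar case, $\Vert u_g\Vert_{C_r^*\Gamma}=1$ for every $g$, while $\Vert u_g\Vert_{{\mathfrak A}_k\Gamma}=(1+\ell_S(g))^k$ is unbounded, and the elements $u_g$ all lie in every $A_\lambda\Gamma$. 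This is also why (6.3) is \emph{not} a direct application of the Limit Theorem with target $C_r^*\Gamma$: there the Limit Theorem is applied with target $A\Gamma$ (where $\overline{\lim}_\lambda\Vert b\Vert_\lambda=\Vert b\Vert_{A\Gamma}$, so the estimate is trivial), and the remaining inclusion $A\Gamma\hookrightarrow C_r^*\Gamma$ is handled by the smooth subalgebra theorem in Proposition 6.2. So "the matrix-coefficient analogue of the estimate underlying (6.3)" would in fact require a ${\cal K(H)}$-coefficient smooth subalgebra theorem for $A(\Gamma,{\cal K(H)})\subset C_r^*\Gamma\otimes_{C^*}{\cal K(H)}$ (density, stability under holomorphic functional calculus, etc.), which is a genuine additional input you do not supply.

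The paper avoids this entirely by verifying the two \emph{vertical} arrows directly and then reading off the top arrow from commutativity and (6.3). Concretely, $i_p$ is factored through the matrix stabilization: $B\to"\underset{n}{\lim}"M_nB$ is a local $HC$-equivalence by matrix stability of cyclic homology (purely formal), and the inclusions $"\underset{n}{\lim}"M_nC\to C\otimes_{C^*}{\cal K(H)}$ and $"\underset{n}{\lim}"M_nA\Gamma\to A(\Gamma,{\cal K(H)})$ satisfy the Limit Theorem hypotheses without difficulty, because $p_n{\cal K}p_n\simeq M_n\C$ sits isometrically in ${\cal K}$, so the norm comparison is an identity rather than a comparison between an unconditional norm and a $C^*$-norm. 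If you want to keep your route, you must either prove the ${\cal K(H)}$-coefficient smooth subalgebra statement or switch to the paper's factorization for the verticals.
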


\begin{proof}
The upper horizontal arrow is well defined by \cite{Pu3}, 4.4.
Let $B$ be a Banach algebra and let $i_p:\C\to{\cal K}$ be a homomorphism which sends $1\in\C$ to a rank one projection $p\in{\cal K(H)}$. (All these homomorphisms are conjugate to each other.)
Let $p_n,n\in\N,$ be an increasing sequence of finite rank projections such that
$\underset{n\to\infty}{\lim}p_n{\cal K}p_n\,\simeq\,\underset{n\to\infty}{\lim}M_n\C$ is dense in $\cal K$.
By matrix stability of cyclic homology
$$
B\overset{i_{p}}{\longrightarrow}"\underset{n\to\infty}{\lim}" M_nB
\eqno(6.7)
$$
is a local $HC$-equivalence. The limit theorem 5.2 implies then that 
$$
i_{p}:C\,\longrightarrow\,"\underset{n\to\infty}{\lim}"M_nC\to
C\otimes_{C^*}{\cal K(H)},
\eqno(6.8)
$$
$C$ a $C^*$-algebra, and
$$
i_{p}:A\Gamma\,\longrightarrow\,"\underset{n\to\infty}{\lim}" M_n A\Gamma\to
A(\Gamma,{\cal K(H)}),
\eqno(6.9)
$$
$A\Gamma$ an unconditional Banach algebra over $\Gamma$,
are local $HC$-equivalences. The same holds for formal inductive limits of such algebras. Thus the vertical arrows in the diagram above are local $HC$-equivalences and the same holds by (6.3) for the horizontal arrows.
\end{proof}

We have now a look at the various characteristic homomorphisms attached to 
the Kasparov bimodule $\gamma_{KS}$ \cite{KaSk} representing Kasparov's Gamma- element.

\begin{prop}
Let $\Gamma$ be a word-hyperbolic group.
Let $\gamma_{KS}$ \cite{KaSk} be the Kasparov bimodule representing Kasparov's Gamma-element and let $j_r(\gamma_{KS})$ be its image under descent \cite{Ka2}. Let 
$$
\Phi_{j_r(\gamma_{KS})}:\,q(C_r^*\Gamma)\,\longrightarrow\,C_r^*\Gamma\otimes_{C^*}{\cal K(H)}
\eqno(6.10)
$$ 
be the assocciated characteristic homomorphism of $C^*$-algebras 
and let for $\lambda\mapsto R_\lambda$ sufficiently large 
$$
\Psi_{\gamma_{KS}}:
"\underset{\lambda\searrow 1}{\lim}" q_{R_\lambda}(A_\lambda\Gamma)\, \longrightarrow\,
"\underset{\lambda\searrow 1}{\lim}" A_\lambda(\Gamma, {\cal K(H)}) 
\eqno(6.11)
$$ 
be the corresponding homomorphism of unconditional ind-Banach algebras.
Then $\Phi_{j_r(\gamma_{KS})}$ is a local $HC$-equivalence if and only if $\Psi_{\gamma_{KS}}$ is.
\end{prop}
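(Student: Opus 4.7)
The plan is to fit the two characteristic homomorphisms into a commutative square in which both vertical arrows are already known to be local $HC$-equivalences, and then invoke the two-out-of-three property of isomorphisms in the triangulated derived ind-category $ind\,\Dh$.

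Concretely, I would consider the square
$$
\begin{array}{ccc}
"\underset{\lambda\searrow 1}{\lim}" q_{R_\lambda}(A_\lambda\Gamma) & \overset{\Psi_{\gamma_{KS}}}{\longrightarrow} & "\underset{\lambda\searrow 1}{\lim}" A_\lambda(\Gamma,{\cal K(H)}) \\
\downarrow & & \downarrow \\
q(C_r^*\Gamma) & \overset{\Phi_{j_r(\gamma_{KS})}}{\longrightarrow} & C_r^*\Gamma \otimes_{C^*} {\cal K(H)} \\
\end{array}
$$
whose vertical arrows are induced by the canonical inclusions $A_\lambda\Gamma \hookrightarrow C_r^*\Gamma$ of unconditional Banach algebras into the reduced group $C^*$-algebra (applied functorially to the Cuntz $q$-construction on the left and tensored with $\Kh({\cal H})$ on the right). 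Lemma 6.3 asserts that the left vertical is a local $HC$-equivalence, while the right vertical is precisely the top horizontal arrow of the diagram in Lemma 6.4, hence also a local $HC$-equivalence.

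Once commutativity of the square is verified, the conclusion is immediate: in the triangulated category $ind\,\Dh$, if the two vertical sides of a commutative square are isomorphisms, then the top map is an isomorphism if and only if the bottom one is. This is just two-out-of-three applied to the relation $(\text{right vertical})\circ\Psi_{\gamma_{KS}}=\Phi_{j_r(\gamma_{KS})}\circ(\text{left vertical})$ after inverting the two vertical arrows in $ind\,\Dh$.

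The one thing to check is the commutativity of the square, and I expect this to be the only genuinely delicate point. This reduces to a direct inspection of the constructions: $\Phi_{j_r(\gamma_{KS})}$ is defined by formulas (2.4)–(2.7) applied to the Kasparov bimodule $j_r(\gamma_{KS})$ over $C_r^*\Gamma$ obtained from $\gamma_{KS}$ by descent, whereas $\Psi_{\gamma_{KS}}$ is defined by the analogous formulas (4.8)–(4.11) applied to the same Fredholm representation data $({\cal H},\varphi,\rho,F)$ of $(\C,\Gamma)$, but carried out on the unconditional algebraic crossed products. Since the inclusions $A_\lambda\Gamma\hookrightarrow C_r^*\Gamma$ and $A_\lambda(\Gamma,{\cal K(H)})\hookrightarrow C_r^*\Gamma\otimes_{C^*}{\cal K(H)}$ intertwine the corresponding homomorphisms $\psi_0,\psi_1$ and $\phi_0,\phi_1$, the naturality of the $q_R$- and $q$-functors with respect to the target yields commutativity. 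This is essentially bookkeeping, and the actual content of the proposition lives entirely in Lemmas 6.3 and 6.4 together with the limit theorem 5.2.
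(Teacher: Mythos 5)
Your proposal follows essentially the same route as the paper: Puschnigg also places $\Psi_{\gamma_{KS}}$ and $\Phi_{j_r(\gamma_{KS})}$ into a commutative square (his diagram (6.18)) whose vertical arrows are local $HC$-equivalences by Lemmas 6.3 and 6.4, and concludes by two-out-of-three. The only point your ``bookkeeping'' glosses over is that descent lands in ${\cal K(H)}\rtimes_r\Gamma$ with $\Gamma$ acting by conjugation via $\rho$, so matching $\phi_0,\phi_1$ with $\psi_0,\psi_1$ requires the untwisting isomorphism $T\otimes u_g\mapsto T\rho(g)\otimes u_g$ of (6.16), available because the action is inner (together with the normalization $F^2=1$ from Remark 3.4).
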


\begin{proof}
Working again with the augmented instead of the ordinary Rips complex we may assume as in remark 3.4 that the basic operator $F$ of the equivariant Fredholm module (Fredholm representation) 
$$
\gamma_{KS}=({\cal H},\varphi,\rho,F)
\eqno(6.12)
$$ 
satisfies $F^2=1$. The same holds then for the Fredholm module
$$
j_r(\Gamma)=({\cal H}\widehat{\rtimes}_\rho C_r^*\Gamma,\varphi\widehat{\otimes} id,F\widehat{\otimes}id)
\eqno(6.13)
$$ 
obtained by descent (see \cite{Ka1}). The associated homomorphisms (2.4) and (2.5) are given by 
$$
\phi_0: C^*_r\Gamma\to{\cal L}({\cal H}\widehat{\rtimes}_\rho C_r^*\Gamma)={\cal L(H)}\rtimes_r\Gamma,\,u_g\mapsto \frac{1+\epsilon}{2}\varphi(1)\frac{1+\epsilon}{2}\otimes u_g
\eqno(6.14)
$$ 
and
$$
\phi_1: C^*_r\Gamma\to{\cal L(H)}\rtimes_r\Gamma,\,u_g\mapsto 
F\frac{1-\epsilon}{2}\varphi(1)\frac{1-\epsilon}{2}\rho(g)F\rho(g)^{-1}\otimes u_g.
\eqno(6.15)
$$
The action of $\Gamma$ on ${\cal L(H)}$ being inner, there is a canonical isomorphism
$$
{\cal L}({\cal H}\widehat{\rtimes}_\rho C_r^*\Gamma)\,=\,{\cal L(H)}\rtimes_r\Gamma\,\overset{\simeq}{\longrightarrow}\,C^*_r\Gamma\otimes_{min}{\cal L(H)},\,T\otimes u_g\mapsto T\rho(g)\otimes u_g.
\eqno(6.16)
$$
Consequently the diagrams
$$
\begin{array}{ccccc}
C^*_r\Gamma & \overset{\phi_i}{\longrightarrow} & {\cal L(H)}\rtimes_r\Gamma
 & \overset{\simeq}{\longrightarrow} & C^*_r\Gamma\otimes_{min}{\cal L(H)} \\
 & & & & \\
\uparrow & & & & \uparrow \\
 & & & & \\
"\underset{\lambda\searrow 1}{\lim}" A_\lambda\Gamma & &  \underset{\psi_i}{\longrightarrow} & & 
"\underset{\lambda\searrow 1}{\lim}" A_\lambda(\Gamma, {\cal L(H)}), \\
\end{array}
\eqno(6.17)
$$
i=0,1, and
$$
\begin{array}{ccccc}
q(C^*_r\Gamma) & \overset{q(\phi_0,\phi_1)}{\longrightarrow} & {\cal K(H)}\rtimes_r\Gamma
 & \overset{\simeq}{\longrightarrow} & C^*_r\Gamma\otimes_{C^*}{\cal K(H)} \\
 & & & &  \\
\uparrow & & & &  \uparrow \\
 & & & & \\
"\underset{\lambda\searrow 1}{\lim}" q_{R_\lambda}(A_\lambda\Gamma) & &  \underset{\Psi_{\gamma_{KS}}}{\longrightarrow} & & 
"\underset{\lambda\searrow 1}{\lim}" A_\lambda(\Gamma, {\cal K(H)}), \\
\end{array}
\eqno(6.18)
$$
commute. The composition of the upper horizontal arrows equals the characteristic homomorphism $\Phi_{j_r(\gamma_{KS})}$. 
According to lemma 6.3 and lemma 6.4 the vertical arrows are local $HC$-equivalences. The assertion follows.
\end{proof}

\begin{prop}
The characteristic morphism 
$$
\Psi_{\gamma_{KS}}:
"\underset{\lambda\searrow 1}{\lim}" q_{R_\lambda}(A_\lambda\Gamma)\, \longrightarrow\,
"\underset{\lambda\searrow 1}{\lim}" A_\lambda(\Gamma, {\cal K(H)}) 
\eqno(6.19)
$$ 
is a local $HC$-equivalence.
\end{prop}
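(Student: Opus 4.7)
The plan is to combine Theorem 4.4 with Lemmas 6.3 and 6.4, using the invariance of local cyclic cohomology under piecewise $C^1$-homotopies of Banach algebra homomorphisms (Lemma 4.5). Theorem 4.4 furnishes, for each $\lambda > 1$ and sufficiently large $R_\lambda$, a piecewise $C^1$-operator homotopy connecting $\Psi_{\gamma_{KS}}$ restricted to $q_{R_\lambda}(A_\lambda\Gamma)$ with the composition $i_p\circ q_{R_\lambda}(id,0)$. By Lemma 4.5 such homotopies induce chain homotopies on the corresponding analytic cyclic bicomplexes, and hence equalities in the derived ind-category $ind\,{\cal D}$.

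The first and most technical step is to assemble these level-wise homotopies into a single homotopy of ind-morphisms. I choose a monotone increasing map $\lambda\mapsto R_\lambda$ large enough for Theorem 4.4 to apply at each level, together with a decreasing function $\mu(\lambda)>1$, with $\mu(\lambda)\searrow 1$ as $\lambda\searrow 1$, such that $\Psi_{\gamma_{KS}}|_{q_{R_\lambda}(A_\lambda\Gamma)}$ factors as a bounded homomorphism into $A_{\mu(\lambda)}(\Gamma,{\cal K(H)})$. This is possible because at source parameter $\lambda$ one may apply Lafforgue's family ${\cal E}_{s,T}$ with $s<\log\lambda$, and the remaining room in the source parameter then absorbs an exponent $\mu(\lambda)=\lambda e^{-s}>1$ in the target. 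Since Lafforgue's operator homotopy is given by universal formulas in $s$, the level-wise homotopies are manifestly compatible as $\lambda$ varies, and they assemble (in the sense of Proposition 5.1) into a single homotopy of ind-morphisms
$$
\Psi_{\gamma_{KS}},\,i_p\circ q(id,0):\,"\underset{\lambda\searrow 1}{\lim}"\,q_{R_\lambda}(A_\lambda\Gamma)\,\longrightarrow\,"\underset{\mu\searrow 1}{\lim}"\,A_\mu(\Gamma,{\cal K(H)}).
$$

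It then remains only to note that $i_p\circ q(id,0)$ on the right-hand side is a local $HC$-equivalence. Lemma 6.3 gives that $q(id,0)$ induces a local $HC$-equivalence from $"\underset{\lambda\searrow 1}{\lim}"\,q_{R_\lambda}(A_\lambda\Gamma)$ to $"\underset{\lambda\searrow 1}{\lim}"\,A_\lambda\Gamma$, and Lemma 6.4 gives that $i_p$ induces a local $HC$-equivalence from $"\underset{\lambda\searrow 1}{\lim}"\,A_\lambda\Gamma$ to $"\underset{\lambda\searrow 1}{\lim}"\,A_\lambda(\Gamma,{\cal K(H)})$. Composing these two and combining with the homotopy from the previous paragraph yields the claim. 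The main obstacle is thus the parameter-matching in the first step; everything else is formal, relying on the homotopy invariance of the analytic cyclic bicomplex under smooth homotopies and on the two lemmas cited.
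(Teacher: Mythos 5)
Your overall strategy --- compare $\Psi_{\gamma_{KS}}$ with $i_p\circ q(id,0)$ via the piecewise $C^1$-homotopies of Theorem 4.5 (what you call Theorem 4.4; the homotopy-invariance lemma is 4.4 in the paper) and then invoke Lemmas 6.3 and 6.4 --- is the paper's strategy. The gap is your middle step, where you claim that, because Lafforgue's operators are given by universal formulas in $s$, the level-wise homotopies are ``manifestly compatible'' as $\lambda$ varies and therefore assemble into a single homotopy of ind-morphisms. This is exactly the shortcut the paper explicitly declines to take: its proof remarks that it is tempting to conclude that the two ind-morphisms coincide in the derived ind-category, but that the presence of higher inverse limits makes such a conclusion illegitimate without a much finer analysis. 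Concretely, the homotopy of Theorem 4.5 at a given $\lambda=e^{s}$ is a concatenation of several pieces: the identification of the completion ${\cal H}_{x,s}$ with a fixed model Hilbert space (canonical only up to inner automorphism), the choice of a time $T=T(s)$ for which Lemma 3.5 applies, Lafforgue's operator homotopy on $[0,T]$, the homotopy of Lemma 3.6, and the final homotopy of $p_x$ to a rank-one projection. None of these choices is compatible with the structure maps of the inductive systems as $\lambda$ varies, so the compatibility is not ``manifest'' at all; moreover Proposition 5.1 is an isomorphism criterion for a single chain map, not an assembly device for homotopies, so invoking it to glue the homotopies has no content.

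The repair --- and this is the argument the paper actually gives --- is to observe that no assembly is needed: criterion 5.1 is a purely level-wise condition, so if for every index of the source one can choose representatives of the two ind-morphisms which are chain homotopic, with no compatibility whatsoever required between the homotopies for different indices, then one ind-morphism satisfies the criterion if and only if the other does; by the Cartan homotopy formula, level-wise piecewise $C^1$-homotopies of representing homomorphisms of Banach algebras suffice for this. With that observation, your remaining steps go through unchanged: Theorem 4.5 applied level by level, and then (6.5) and (6.6), i.e.\ Lemmas 6.3 and 6.4, to see that $i_p\circ q(id,0)$ is a local $HC$-equivalence, whence so is $\Psi_{\gamma_{KS}}$. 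Note also that your parameter bookkeeping with $\mu(\lambda)$ is not where the real difficulty lies: since $\gamma_{KS}$ has exponent $1$, its characteristic homomorphism at source level $\lambda$ already lands in $A_\mu(\Gamma,{\cal K(H)})$ for any $\mu\leq\lambda$; the delicate $\lambda$-dependence sits in the homotopies, not in $\Psi_{\gamma_{KS}}$ itself.
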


\begin{proof}
We make first the following observation. Let 
$$
f_0,\,f_1:
"\underset{i\in I}{\lim}"C_*^{(i)}\to"\underset{j\in J}{\lim}"E_*^{(j)}
\eqno(6.20)
$$ 
be morphisms of ind-complexes and suppose that for all $i\in I$ there exists $j\in J$ and maps $f_0^{ij},f_1^{ij}:C_*^{(i)}\to E_*^{(j)}$ representing $f_0$ resp. $f_1$, and such that $f_0^{ij}$ is chain homotopic to $f_1^{ij}$. 
(We do not require any compatibility between the homotopies under the structure maps of the inductive systems.) Then the isomorphisms criterion 5.1 
applies to $f_0$ if and only if it applies to $f_1$. 
Recall that the Cartan homotopy formula in cyclic homology \cite{CQ2} 
shows that piecewise $C^1$-homotopic homomorphisms of Banach algebras 
induce chain homotopic maps of analytic cyclic chain complexes.
Together with the previous observation this shows the following.
Let $f,g:"\underset{i\in I}{\lim}"A_i\to"\underset{j\in J}{\lim}"B_j$ be homomorphisms of ind-Banach algebras and suppose that for all $i\in I$ there exists $j\in J$ and homomorphisms $f_{ij},g_{ij}:A_i\to B_j$ representing $f$ resp. $g$, and such that $f_{ij}$ and $g_{ij}$ are piecewise $C^1$-homotopic. Then $f$ is a local $HC$-equivalence if and only if $g$ is.
Theorem 4.5 shows that we may apply this argument to the homomorphisms
$$
\Psi_{\gamma_{KS}}:
"\underset{\lambda\searrow 1}{\lim}" q_{R_\lambda}(A_\lambda\Gamma)\, \longrightarrow\,
"\underset{\lambda\searrow 1}{\lim}" A_\lambda(\Gamma, {\cal K(H)})
\eqno(6.21)
$$ 
and 
$$
"\underset{\lambda\searrow 1}{\lim}" q_{R_\lambda}(A_\lambda\Gamma)\, \overset{q(id,0)}{\longrightarrow}\,"\underset{\lambda\searrow 1}{\lim}" (A_\lambda\Gamma)\,\overset{i_p}{\longrightarrow}\,
"\underset{\lambda\searrow 1}{\lim}" A_\lambda(\Gamma, {\cal K(H)}) 
\eqno(6.22)
$$
for $p\in{\cal K(H)}$ a rank one projection and $\lambda\mapsto R_\lambda$ suitably chosen. The latter one is a local $HC$-equivalenvce by (6.5) and (6.6) and the conclusion follows. We remark that it is tempting to conclude directly that the two morphisms above are equal as morphisms in the derived ind-category. Due to the presence of higher inverse limits one has to be very cautious with such "conclusions" however and may justify them only by a more detailed analysis. Therefore we prefered to give the more indirect argument above.
\end{proof}

\newpage

\begin{proof1}
With the previous results at hand it is easy to proceed.
By construction (see 2.12), the Chern-Connes character of the "Gamma" element of a word-hyperbolic group is given by 
$$
\begin{array}{ccccc}
ch_{biv}(j_r(\gamma)) & = &   i_{p*}^{-1}\circ\Phi_{j_r(\gamma_{KS})*}\circ q(id,0)_*^{-1} & \in & HC^0_{loc}(C^*_r\Gamma,C^*_r\Gamma) \\
\end{array}
\eqno(6.23)
$$
According to 6.5 and 6.6 the characteristic homomorphism $\Phi_{j_r(\gamma_{KS})}$ is a local $HC$-equivalence. Therefore the three 
morphisms $i_{p*}^{-1},\,\Phi_{j_r(\gamma_{KS})*},$ and $q(id,0)_*^{-1}$ as well as their composition $ch_{biv}(j_r(\gamma))$ are isomorphisms in the derived ind-category $ind\,{\cal D}$. 

The "Gamma"-element is an idempotent under the Kasparov product. The multiplicativity of the descent transformation and of the bivariant Chern-Connes character imply that $ch_{biv}(j_r(\gamma))$
is an idempotent isomorphism in $ind\,{\cal D}$. The unique idempotent isomorphism being the identity we conclude 
$$
ch_{biv}(j_r(\gamma))\,=\,1\,\in\,HC^0_{loc}(C_r^*\Gamma,C_r^*\Gamma).
\eqno(6.24)
$$
\end{proof1}

We note also the following consequence of the previous theorem.

\begin{theorem}
Let $\Gamma$ be a word-hyperbolic group and denote by $\Gamma_{tors}$ the subset of elements of finite order. (The ambient group $\Gamma$ acts on $\Gamma_{tors}$ by conjugation). Let ${\cal A}\Gamma$ a sufficiently large unconditional Banach algebra over $\Gamma$ (see \cite{Pu3}) and let $A,B$ be any Banach algebras. Then there are natural isomorphisms
$$
\begin{array}{ccc}
HC_{loc}^*(C_r^*\Gamma\otimes_{\pi} A,B) & \simeq & H^*(\Gamma,\C\Gamma_{tors})\otimes HC_{loc}^*(A,B), 
\end{array}
\eqno(6.25)
$$
and
$$
\begin{array}{ccc}
HC_{loc}^*(A,C_r^*\Gamma\otimes_{\pi}B) & \simeq & H_*(\Gamma,\C\Gamma_{tors})\otimes HC_{loc}^*(A,B).
\end{array}
\eqno(6.26)
$$
\end{theorem}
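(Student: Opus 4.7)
The plan is to exploit Theorem 6.1 to replace $C^*_r\Gamma$ by a sufficiently large unconditional Banach algebra ${\cal A}\Gamma$, then to split the analytic cyclic ind-complex of ${\cal A}\Gamma$ along conjugacy classes in $\Gamma$, and finally to show that only the torsion classes survive. Proposition 6.2, combined with the limit theorem 5.2 and compatibility of the unconditional norm with projective tensor products of Banach algebras, first reduces the computation to $HC_{loc}({\cal A}\Gamma \otimes_\pi A, B)$ respectively $HC_{loc}(A, {\cal A}\Gamma \otimes_\pi B)$, since the vertical inclusions in (6.6)-type diagrams remain local $HC$-equivalences after tensoring.

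Next I would establish a Burghelea-type decomposition of the analytic cyclic bicomplex
$$
X_*({\cal T}{\cal A}\Gamma) \,\simeq\, \bigoplus_{[g]}\, X^{[g]}_*({\cal T}{\cal A}\Gamma),
$$
where $[g]$ ranges over conjugacy classes of $\Gamma$ and $X^{[g]}_*$ is spanned by tensors $a_{g_0} \otimes \cdots \otimes a_{g_n}$ with $g_0\cdots g_n \in [g]$. Unconditionality makes this splitting compatible with the Banach norm, so it passes to the level of analytic cyclic ind-complexes. For a torsion class $[g] \in [\Gamma_{tors}]$, Burghelea's classical identification together with the limit theorem 5.2 applied to the inclusion of $\C[C_\Gamma(g)]$ into its induced unconditional completion yields a local $HC$-equivalence $X^{[g]}_*({\cal T}{\cal A}\Gamma) \simeq C_*(C_\Gamma(g),\C)$.

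The main technical obstacle is to prove that every hyperbolic summand $X^{[g]}_*({\cal T}{\cal A}\Gamma)$, corresponding to a class of infinite order, is weakly contractible in the derived ind-category $ind\,{\cal D}$. Geometrically, the centralizer of such a $g$ in a word-hyperbolic group is virtually cyclic and acts on the augmented Rips complex with a uniform displacement gap; combined with Lafforgue's norm estimates and the contractibility of $(\C(\widetilde{\Delta}),\partial)$ already exploited in section 3, this should allow one to build, for the unconditional norm of ${\cal A}\Gamma$, a chain contraction of $X^{[g]}_*({\cal T}{\cal A}\Gamma)$ satisfying the isomorphism criterion of Proposition 5.1. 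Once this weak contractibility is in hand, Shapiro's lemma
$$
\bigoplus_{[g] \in [\Gamma_{tors}]} H_*(C_\Gamma(g),\C) \,\simeq\, H_*(\Gamma, \C\Gamma_{tors})
$$
together with the bivariant K\"unneth formula in local cyclic cohomology (valid once one factor is separable with the Grothendieck approximation property, by Theorem 5.2) yield the covariant isomorphism; the cohomological version for $C^*_r\Gamma$ in the first variable is obtained by dualizing the same computation.
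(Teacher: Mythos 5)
The paper gives no written-out proof of this theorem: it is stated as a consequence of Theorem 6.1, the point being that $ch_{biv}(j_r(\gamma))=1$ lets one replace $X_*({\cal T}C_r^*\Gamma)$ (up to isomorphism in $ind\,{\cal D}$) by the corresponding complex for the descent of a proper $\Gamma$-algebra, where the localization at torsion conjugacy classes is accessible and comes with the finiteness needed for the bivariant K\"unneth-type statements; the hard analysis on unconditional completions is quoted from \cite{Pu3}. Your plan bypasses Theorem 6.1 entirely and tries to redo the computation directly by a Burghelea decomposition of $X_*({\cal T}{\cal A}\Gamma)$, and as written it has genuine gaps. First, the conjugacy-class splitting does not simply "pass to the level of analytic cyclic ind-complexes" because of unconditionality: elements of the completion (and chains of the analytic cyclic complex) may have support meeting infinitely many conjugacy classes, so the algebraic direct sum $\bigoplus_{[g]}X_*^{[g]}$ is only dense, and promoting this to a decomposition in the derived ind-category runs exactly into the completed-direct-sum and higher-inverse-limit problems the paper itself warns about at the end of the proof of Proposition 6.6; handling this is the substance of \cite{Pu3}, not a formal consequence of boundedness of the support projections.

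Second, the crucial step — weak contractibility of the hyperbolic summands — is asserted by appeal to "Lafforgue's norm estimates and the contractibility of $(\C(\widetilde{\Delta}),\partial)$". The estimates of Section 3 concern operator homotopies of Fredholm representations on the Hilbert-space completions ${\cal H}_{x,s}$; they do not produce bounded contracting chain homotopies of the $[g]$-components of $X_*({\cal T}{\cal A}\Gamma)$ in the unconditional norm satisfying criterion 5.1, and the "uniform displacement gap / virtually cyclic centralizer" heuristic is not by itself an argument at the level of completed cyclic complexes. Third, your reductions at the two ends are not covered by the results you cite: Proposition 6.2 gives that ${\cal A}\Gamma\hookrightarrow C_r^*\Gamma$ is a local $HC$-equivalence, but it does not follow formally that ${\cal A}\Gamma\otimes_\pi A\to C_r^*\Gamma\otimes_\pi A$ is one for an arbitrary Banach algebra $A$ (stability of local $HC$-equivalences under $\otimes_\pi$ needs approximation-property hypotheses and an argument), and Theorem 5.2 is a limit theorem, not a K\"unneth theorem, so the final step "bivariant K\"unneth formula ... by Theorem 5.2" is a misattribution; the K\"unneth isomorphism in (6.25)--(6.26) needs the finiteness of the torsion part (finitely many conjugacy classes of torsion elements, finite-dimensional Rips model) made explicit. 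In short, the missing idea relative to the paper is precisely the use of Theorem 6.1 to transfer the computation to the proper ($\gamma$-) side; without it your sketch amounts to reproving the main technical results of \cite{Pu3}, and the steps where that difficulty lives are the ones left vague.
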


\section{The Chern-Connes is not rationally injective}

We recall the very elegant argument of G. Skandalis which shows that 
the reduced\\ $C^*$-algebra of a word-hyperbolic group with property $(T)$ 
is not $KK$-equivalent to a nuclear $C^*$-algebra. We do this in order to verify that the result holds even rationally, i.e. with $KK$-theory replaced by $KK\otimes\Q$.

\begin{theorem} (Skandalis, \cite{Sk})
Let $\Gamma$ be a word-hyperbolic group with Kazhdan's Property $(T)$ and let 
$C_r^*\Gamma$ be its reduced group $C^*$-algebra. Let 
$$
C_r^*\Gamma\otimes_{max} C_r^*\Gamma\,\longrightarrow\,C_r^*\Gamma\otimes_{min} C_r^*\Gamma
\eqno(7.1)
$$
be the canonical homomorphism from the maximal to the minimal $C^*$-tensor product. Then the induced map
$$
K_*(C_r^*\Gamma\otimes_{max} C_r^*\Gamma)\otimes\Q\,\longrightarrow\,K_*(C_r^*\Gamma\otimes_{min} C_r^*\Gamma)\otimes\Q
\eqno(7.2)
$$
is not an isomorphism.
\end{theorem}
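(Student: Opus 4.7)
The plan is to follow Skandalis by producing an explicit $K_0$-class $x \in K_0(C_r^*\Gamma \otimes_{max} C_r^*\Gamma) \otimes \Q$ that vanishes under the comparison map (7.2) but is itself rationally nonzero, thereby refuting the injectivity of (7.2). The principal input is property (T), which furnishes the central Kazhdan projection $p_K \in C^*\Gamma$, the spectral projection onto the trivial representation. It satisfies $\epsilon(p_K) = 1$ for the trivial character $\epsilon$, while $p_K \mapsto 0$ under the reduction $C^*\Gamma \to C_r^*\Gamma$, since $\Gamma$ is infinite.

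To construct $x$, I use the diagonal group homomorphism $\Delta \colon \Gamma \to \Gamma \times \Gamma$, which induces a $*$-homomorphism $\Delta_* \colon C^*\Gamma \to C^*(\Gamma \times \Gamma) = C^*\Gamma \otimes_{max} C^*\Gamma$ by the defining property of the maximal tensor product. Post-composing with the reductions on each factor yields a $*$-homomorphism $\widetilde\Delta \colon C^*\Gamma \to C_r^*\Gamma \otimes_{max} C_r^*\Gamma$, and I set $x = [\widetilde\Delta(p_K)]$. Vanishing on the minimal side then follows from Fell's absorption principle: the tensor product $\lambda \otimes \lambda$ of the regular representation restricted to the diagonal is an amplification of $\lambda$, so the parallel composition $C^*\Gamma \to C^*\Gamma \otimes_{max} C^*\Gamma \to C_r^*\Gamma \otimes_{min} C_r^*\Gamma$ factors through the reduction $C^*\Gamma \to C_r^*\Gamma$, under which $p_K$ already dies. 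Hence the image of $x$ under (7.2) is zero.

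The decisive step is rational non-vanishing of $x$ on the maximal side. The obvious integral invariant $\epsilon \otimes \epsilon$ on $C^*\Gamma \otimes_{max} C^*\Gamma$ (which evaluates $\Delta_*(p_K)$ to $1$) does not descend to $C_r^*\Gamma \otimes_{max} C_r^*\Gamma$, so a finer invariant is needed. Following Skandalis, I would detect $x$ via Kasparov's $\gamma$-element. After maximal descent one obtains $j_{max}(\gamma) \in KK(C^*\Gamma, C^*\Gamma)$, and $j_{max}(\gamma) \cdot [p_K] = [p_K]$ rationally in $K_0(C^*\Gamma) \otimes \Q$, because $\gamma$ acts as the identity on the trivial-representation direct summand (the rank-one summand on which $p_K$ is supported). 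Transporting this $KK$-identity across the diagonal yields a rational bivariant pairing on $K_0(C_r^*\Gamma \otimes_{max} C_r^*\Gamma) \otimes \Q$ that detects $x$ nontrivially, whereas the analogous pairing on the minimal side is forced to vanish by the factorization of the previous paragraph. The rational discrepancy of these two pairings gives the theorem.

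The main obstacle is verifying that Skandalis's $KK$-level invariant is genuinely $\Q$-linearly nontrivial, i.e.\ that no denominator absorbs the detection of the Kazhdan direction under inversion of integers. This reduces to the purely formal statement that, for $\Gamma$ word-hyperbolic with property (T), the $\gamma$-element is a nontrivial idempotent in $KK_\Gamma(\C, \C) \otimes \Q$ distinct from $1$ (the rational refinement of the Kasparov-Julg obstruction from property (T)), together with the computation $j_{max}(\gamma) \cdot [p_K] = [p_K]$ rationally. Both assertions are formal $KK$-computations; once this rational strengthening of Skandalis's integral argument is in place, the failure of rational injectivity of (7.2) is immediate.
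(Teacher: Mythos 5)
Your first two steps coincide with the paper's: the class in question is $\Delta(e)$, the image of the Kazhdan projection under the diagonal map into $C_r^*\Gamma\otimes_{max}C_r^*\Gamma$, and its vanishing in $C_r^*\Gamma\otimes_{min}C_r^*\Gamma=C_r^*(\Gamma\times\Gamma)$ is exactly as you say. The gap is in the decisive third step, the rational non-vanishing on the maximal side, and it is a real one. Your proposed detection ``via a pairing coming from $j_{max}(\gamma)$'' is never constructed: you do not say what functional or homomorphism on $K_0(C_r^*\Gamma\otimes_{max}C_r^*\Gamma)\otimes\Q$ it is, and the identity you want to feed into it, $j_{max}(\gamma)\cdot[p_K]=[p_K]$ rationally, is unsubstantiated and almost certainly backwards: the standard property~(T) argument shows that $\gamma$, which factors through a proper $\Gamma$-algebra, \emph{annihilates} the Kazhdan class, and that is precisely how one sees $\gamma\neq 1$. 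Calling these ``formal $KK$-computations'' hides the fact that the rational nontriviality you invoke is of the same nature as the statement being proved (the paper deduces $j_r(\gamma)\neq 1$ rationally \emph{from} Theorem 7.1, not the other way around), so your route is at best circular-flavoured and at worst rests on a false identity. Moreover, word-hyperbolicity enters your argument nowhere except through the existence of $\gamma$, whereas the theorem genuinely needs more than property~(T).

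The missing ingredient is the Akemann--Ostrand property of hyperbolic groups, which is the analytic heart of Skandalis' proof. The obvious representation that sees $\Delta(e)$ is the biregular representation $\rho=\rho_l\otimes\rho_r$ of $C_r^*\Gamma\otimes_{max}C_r^*\Gamma$ on $\ell^2(\Gamma)$: restricted to the diagonal it is the conjugation representation, which fixes $\delta_e$, so $\rho(\Delta(e))\neq 0$. But this alone detects nothing in $K$-theory, since $K_0({\cal L}(\ell^2(\Gamma)))=0$; this is exactly why one cannot hope to show directly that your class $x$ is nonzero in $K_0$ of the maximal tensor product. The paper instead uses that, for $\Gamma$ hyperbolic, $\rho$ maps the kernel ideal $J$ of the map (7.1) into ${\cal K}(\ell^2(\Gamma))$ (Akemann--Ostrand). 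Since $\Delta(e)\in J$ and $\rho(\Delta(e))$ is then a nonzero projection in the compacts, its class in $K_0(J)\otimes\Q$ is nonzero, and the rationalized six-term exact sequence for $0\to J\to C_r^*\Gamma\otimes_{max}C_r^*\Gamma\to C_r^*\Gamma\otimes_{min}C_r^*\Gamma\to 0$ shows (7.2) cannot be an isomorphism. Note also that this argument establishes non-isomorphism via $K_*(J)\otimes\Q\neq 0$ rather than the non-injectivity statement you aim for; detecting $x$ itself in $K_0$ of the maximal tensor product is not needed and is not what Skandalis does. To repair your proposal you would have to import the Akemann--Ostrand factorization (or an equivalent exactness/biexactness input for hyperbolic groups); property~(T) plus formal manipulations of $\gamma$ cannot suffice.
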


\begin{proof2}
For every discrete group $\Gamma$ there exists a commutative diagram 
$$
\begin{array}{ccccccccc}
& & & & C^*_{max}\Gamma &  & & & \\

& & & & \downarrow\Delta & & & & \\

0 & \to & J & \to & C_r^*\Gamma\otimes_{max} C_r^*\Gamma &  \to & 
C_r^*\Gamma\otimes_{min} C_r^*\Gamma\,=\,C_r^*(\Gamma\times\Gamma) & \to & 0 \\

& & & & \downarrow\rho & & & & \\

& & & & {\cal L}(\ell^2(\Gamma)) & & & & \\
\end{array}
\eqno(7.3)
$$
with exact row, where $\Delta$ denotes the diagonal and $\rho=\rho_l\otimes\rho_r$ the biregular representation . If $\Gamma$ is word-hyperbolic, the lower left part 
of the diagram fits, according to Akeman-Ostrand, into a commutative square
$$
\begin{array}{ccc}
J & \to & C_r^*\Gamma\otimes_{max} C_r^*\Gamma \\
 & & \\
i\downarrow & & \downarrow\rho \\
 & & \\
{\cal K}(\ell^2(\Gamma)) & \underset{j}{\to} & {\cal L}(\ell^2(\Gamma)) 
\end{array}
\eqno(7.4)
$$
Suppose now that $\Gamma$ has Kazhdan's Property $(T)$. This means that there exists a projector $e\in C_{max}^*\Gamma$ which is mapped under any involutive representation of $C_{max}^*\Gamma$
on a Hilbert space $\cal H$ to the orthogonal projection of $\cal H$ onto the vectors fixed by the corresponding unitary representation of $\Gamma$.
Then on the one hand the image of $\Delta(e)$ in $C_r^*\Gamma\otimes_{min} C_r^*\Gamma$ is zero (and thus $\Delta(e)\in J$), while on the other hand 
$\rho(\Delta(e))\neq 0\in {\cal L}(\ell^2(\Gamma))$, because there are always nonzero vectors fixed under the adjoint representation. Thus $j(i(\Delta(e)))\neq 0$ and $i(\Delta(e))$ is a non-zero projection in ${\cal K}(\ell^2(\Gamma))$. Its class in rational $K$-theory does not vanish.
So the homomorphism $K(i):K(J)\otimes\Q\to K({\cal K}(\ell^2(\Gamma)))\otimes\Q$ is not zero. The long exact sequence in $K$-theory implies then that $K_*(C_r^*\Gamma\otimes_{max} C_r^*\Gamma)\otimes\Q\,\to\,K_*(C_r^*\Gamma\otimes_{min} C_r^*\Gamma)\otimes\Q$ cannot be an isomorphism.
\end{proof2}

\begin{cor} (Skandalis, \cite{Sk})
Let $\Gamma$ be a word-hyperbolic group with Kazhdan's Property $(T)$ and let 
$\gamma\in KK_\Gamma(\C,\C)$ be Kasparov's "Gamma"-element. Then
$$
j_r(\gamma)\,\neq\,1\,\in\,KK(C_r^*\Gamma,C_r^*\Gamma)\otimes\Q
\eqno(7.5)
$$
\end{cor}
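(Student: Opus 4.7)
The plan is to derive the corollary from Theorem 7.1 by contradiction. Suppose that $j_r(\gamma) = 1 \in KK(C_r^*\Gamma, C_r^*\Gamma) \otimes \Q$. Kasparov's Gamma-element admits a factorization
$$
\gamma \,=\, \eta \cdot D, \qquad \eta \in KK_\Gamma(\C, P),\quad D \in KK_\Gamma(P, \C),
$$
through a proper $\Gamma$-$C^*$-algebra $P$ (for a word-hyperbolic group one may take $P = C_0(\Delta)$ with $\Delta$ a Rips complex). Properness of the action implies that $N := P \rtimes_r \Gamma$ coincides with the maximal crossed product and is a \emph{nuclear} $C^*$-algebra. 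Since descent is multiplicative, $j_r(\gamma) = j_r(\eta) \cdot j_r(D)$, so the contradiction hypothesis forces the identity class $1_{C_r^*\Gamma}$ to factor, rationally in $KK$, through the nuclear algebra $N$.

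The next step is to propagate this factorization to tensor products. For any separable $C^*$-algebra $B$, the external Kasparov product with $1_B$ (taken in the max tensor product) produces classes in $KK \otimes \Q$ whose Kasparov product is $1_{C_r^*\Gamma \otimes_{max} B}$, exhibiting a factorization through $N \otimes B$. Since $N$ is nuclear, $N \otimes B$ is unambiguous, and, following Skandalis' K-nuclearity theory cited in the introduction, the underlying Kasparov modules can simultaneously be interpreted over the minimal tensor product $C_r^*\Gamma \otimes_{min} B$, giving a parallel rational factorization of $1_{C_r^*\Gamma \otimes_{min} B}$ through $N \otimes B$. The canonical surjection $p_B: C_r^*\Gamma \otimes_{max} B \twoheadrightarrow C_r^*\Gamma \otimes_{min} B$ is compatible with these two factorizations at the level of $K$-theory, and a short diagram chase produces an explicit two-sided inverse for $(p_B)_*$ on $K_* \otimes \Q$, forcing $(p_B)_*$ to be a rational isomorphism for every separable $B$. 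Taking $B = C_r^*\Gamma$ then contradicts Theorem 7.1.

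The main obstacle will be the K-nuclearity step: verifying that the Kasparov modules representing the descended classes $j_r(\eta)$ and $j_r(D)$, when tensored externally with $B$, genuinely give classes on both the maximal and the minimal tensor product, and that the four resulting classes interact with $p_B$ in the right way. All of this is handled by Skandalis' machinery and hinges on the nuclearity of $N$. Everything else---multiplicativity of descent, the external Kasparov product, and the final diagram chase---is a formal manipulation which applies without modification to any additive, split exact, and matrix stable bifunctor on $C^*$-algebras.
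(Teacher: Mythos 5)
Your proposal is correct and follows essentially the same route as the paper's sketch: assume $j_r(\gamma)=1$ rationally, use Kasparov's dirac/dual-dirac factorization of $\gamma$ through a proper algebra whose reduced crossed product is nuclear, let the resulting rational $KK$-factorization of the identity act compatibly on $K_*(-\otimes_{max}-)\otimes\Q$ and $K_*(-\otimes_{min}-)\otimes\Q$, and contradict Theorem 7.1. The only cosmetic difference is that you modify one tensor factor and deduce that $(p_B)_*$ is a rational isomorphism for every $B$, while the paper's diagram (7.8) replaces both factors by $A\rtimes_r\Gamma$ at once; both versions rest on the same formal action of (rational) $KK$-classes on these split-exact, stable, homotopy functors and on nuclearity of the proper crossed product, which is exactly the Skandalis-type step you flag.
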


\begin{proof2}
By construction (see \cite{Ka2}) there exists a proper nuclear $\Gamma-C^*$-algebra $A$ and elements 
$$
\begin{array}{cc}
\alpha\in KK_\Gamma(A,\C), & \beta\in KK_\Gamma(\C,A)
\end{array}
\eqno(7.6)
$$
such that 
$$
\begin{array}{cc}
\alpha\circ\beta=1\,\in KK_\Gamma(A,A), & \beta\circ\alpha=\gamma\,\in KK_\Gamma(\C,\C).
\end{array}
\eqno(7.7)
$$
Suppose that $j_r(\gamma)=1\,\in\,KK(C_r^*\Gamma,C_r^*\Gamma)\otimes\Q$. Then the horizontal arrows in the commutative diagram 
$$
\begin{array}{ccc}
K(A\rtimes_r\Gamma\otimes_{max}A\rtimes_r\Gamma)\otimes\Q & \overset{j_r(\alpha)\otimes_{max}j_r(\alpha)}{\longrightarrow} &
K(C^*_r\Gamma\otimes_{max}C^*_r\Gamma)\otimes\Q \\
 & & \\
\downarrow & & \downarrow \\
 & & \\
K(A\rtimes_r\Gamma\otimes_{min}A\rtimes_r\Gamma)\otimes\Q & 
\underset{j_r(\alpha)\otimes_{min}j_r(\alpha)}{\longrightarrow} &
K(C^*_r\Gamma\otimes_{min}C^*_r\Gamma)\otimes\Q
\end{array}
\eqno(7.8)
$$
are isomorphisms. The left vertical arrow is an isomorphisms because the algebra $A\rtimes_r\Gamma$ is nuclear ($A$ being nuclear and the $\Gamma$-action being proper) and one concludes that the right vertical arrow is an isomorphism which contradicts theorem 7.1.
\end{proof2}
Now we can state the main result of this paper.
\begin{theorem}
Let $\Gamma$ be a word-hyperbolic group which has Kazhdan's Property $(T)$.
(Such groups are in a suitable sense generic among all word-hyperbolic groups). Then the bivariant Chern-Connes character 
$$
\begin{array}{cccc}
ch_{biv}: & KK(C^*_r\Gamma,C^*_r\Gamma) & \longrightarrow & 
HC_{loc}(C^*_r\Gamma,C^*_r\Gamma)
\end{array}
\eqno(7.9)
$$
is not rationally injective.
\end{theorem}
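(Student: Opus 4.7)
The plan is to combine the two key ingredients already assembled in the paper: Theorem 6.1 says the Chern--Connes character collapses $j_r(\gamma)$ and $1$ to the same element in local cyclic cohomology, while Corollary 7.2 (Skandalis) says the same two elements remain distinct in rational $KK$-theory. Together they exhibit a specific nonzero rational $KK$-class that dies under $ch_{biv}$.

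More concretely, I would proceed as follows. Consider the element
$$
\xi \,=\, j_r(\gamma)\,-\,1 \,\in\,KK(C^*_r\Gamma,C^*_r\Gamma).
$$
By Corollary 7.2, the image of $\xi$ in $KK(C^*_r\Gamma,C^*_r\Gamma)\otimes\Q$ is nonzero. On the other hand, the bivariant Chern--Connes character is a natural transformation of additive bifunctors, so it is additive on the abelian group $KK(C^*_r\Gamma,C^*_r\Gamma)$; combining this with Theorem 6.1 gives
$$
ch_{biv}(\xi)\,=\,ch_{biv}(j_r(\gamma))\,-\,ch_{biv}(1)\,=\,1\,-\,1\,=\,0\,\in\,HC_{loc}(C^*_r\Gamma,C^*_r\Gamma).
$$
Since $HC_{loc}(C^*_r\Gamma,C^*_r\Gamma)$ is a $\Q$-vector space (it is the morphism set in the $\C$-linear triangulated category $ind\,\Dh$), the rationalized map $ch_{biv}\otimes\Q$ also sends the class of $\xi$ to zero, while $\xi\otimes 1\neq 0$. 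Hence $ch_{biv}\otimes\Q$ has nontrivial kernel and fails to be rationally injective.

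There is essentially no obstacle left at this stage; all the hard work has been done. The two nontrivial inputs are Theorem 6.1, whose proof rests on the Lafforgue homotopy (Theorem 3.3 and Lemmas 3.5, 3.6) together with the local $HC$-equivalence statements of Section 6, and Skandalis' Corollary 7.2, whose proof uses the descent of the Kasparov decomposition $\gamma=\beta\circ\alpha$ through a proper nuclear $\Gamma$-$C^*$-algebra together with the max/min tensor product obstruction for hyperbolic groups with property $(T)$ from Theorem 7.1. The only minor point to verify is the additivity of $ch_{biv}$, which is immediate from its construction (2.12) as a composition of morphisms in the additive category $ind\,\Dh$, and the fact that $HC_{loc}$ is $\Q$-linear, so that vanishing in $HC_{loc}$ implies vanishing after tensoring with $\Q$.
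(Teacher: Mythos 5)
Your proposal is correct and follows exactly the paper's own argument: the paper likewise deduces Theorem 7.3 by combining Skandalis' result ($j_r(\gamma)\neq 1$ rationally, Corollary 7.2) with Theorem 6.1 ($ch_{biv}(j_r(\gamma))=1=ch_{biv}(1)$), so that $j_r(\gamma)-1$ is a nonzero rational class killed by the character. Your additional remarks on additivity and $\Q$-linearity of $HC_{loc}$ merely make explicit what the paper leaves implicit.
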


\begin{proof}
Let $\Gamma$ be a word-hyperbolic group which has Kazhdan's Property $(T)$. 
Let $\gamma\in KK_\Gamma(\C,\C)$ be Kasparov's "Gamma"-element and let 
$j_r(\gamma)\in KK(C^*_r\Gamma,C^*_r\Gamma)$ be the element obtained from it by descent \cite{Ka2}. Then 
$$
j_r(\gamma)\,\neq\,1\,\in\,KK(C^*_r\Gamma,C^*_r\Gamma)\otimes\Q
\eqno(7.10)
$$
by Skandalis' theorem 7.2 while 
$$
ch_{biv}(j_r(\gamma))\,=\,1\,=\,ch_{biv}(1)\,\in\,HC_{loc}(C^*_r\Gamma,C^*_r\Gamma)
\eqno(7.11)
$$
according to theorem 6.1.

\end{proof}

\end{document}